\newcommand{\articletitle}{Terminal-Pairability in Complete Bipartite Graphs with Non-Bipartite Demands}
\newtheorem{theorem}{Theorem}
\newtheorem*{theorem*}{Theorem}
\newtheorem{corollary}[theorem]{Corollary}
\newtheorem{proposition}[theorem]{Proposition}
\newcommand\blfootnote[1]{%
\begingroup
\renewcommand\thefootnote{}\footnote{#1}%
\addtocounter{footnote}{-1}%
\endgroup
}
\g@addto@macro{\endabstract}{\@setabstract}
\newcommand{\authorfootnotes}{\renewcommand\thefootnote{\@fnsymbol\c@footnote}}%
\begin{document}
\thispagestyle{empty}
\addtocounter{footnote}{1}

\mbox{}
\begin{center}
  \vspace{36pt}

  {\Large {\bfseries\boldmath \articletitle} \\ \medskip \normalsize Edge-disjoint paths in complete bipartite graphs} \par \bigskip

  \blfootnote{E-mail addresses: \texttt{lucas.colucci.souza@gmail.com}, \texttt{erdos.peter@renyi.mta.hu},\\\texttt{gyori.ervin@renyi.mta.hu}, \texttt{tamasrobert.mezei@gmail.com}}
  \authorfootnotes
  \small\scshape
  Lucas Colucci\textsuperscript{2},
  P\' eter L.~Erd\H{o}s\textsuperscript{1,}\footnote{Research  of the author was supported by the National Research, Development and Innovation, NKFIH grant K~116769.}\textsuperscript{,}\footnote{Research  of the author was supported by the National Research, Development and Innovation, NKFIH grant SNN~116095.}, \\
  Ervin Gy\H{o}ri\textsuperscript{1,2,}\addtocounter{footnote}{-2}\footnotemark\textsuperscript{,}\footnotemark,
  Tam\'as R\'obert Mezei\textsuperscript{1,2,}\addtocounter{footnote}{-2}\footnotemark\textsuperscript{,}\addtocounter{footnote}{+1}\footnote{Corresponding author}
  \par \bigskip

  \normalfont
  \textsuperscript{1}Alfréd Rényi Institute of Mathematics, Hungarian Academy of Sciences, Re\'altanoda~u.~13--15, 1053 Budapest, Hungary \par
  \textsuperscript{2}Central European University, Department of Mathematics and its Applications, N\'ador~u.~9, 1051 Budapest, Hungary \par \bigskip

  \normalsize\today
\end{center}

\begin{abstract}
  We investigate the terminal-pairability problem in the case when the base graph is a complete bipartite graph, and the demand graph is a (not necessarily bipartite) multigraph on the same vertex set. In computer science, this problem is known as the edge-disjoint paths problem. We improve the lower bound on the maximum value of $\Delta(D)$ which still guarantees that the demand graph $D$ has a realization in $K_{n,n}$. We also solve the  extremal problem on the number of edges, i.e., we determine the maximum number of edges which guarantees that a demand graph is realizable in $K_{n,n}$.
  
  \bigskip
  
  \noindent\emph{Keywords}: edge-disjoint paths; terminal-pairability; complete bipartite graph
\end{abstract}

\section{Introduction and main results}

The \emph{terminal-pairability} problem has been introduced in~\cite{TPP92}. The basic question is as follows: Let $G$ be a simple graph --- the \emph{base graph}, and a let $D$ be a loopless multigraph with the same vertex set $V(D)=V(G)$  --- the \emph{demand graph}. Can we find a path $P(e)$ for every edge $e \in E(D)$ such that $P(e)$ joins the end-vertices of $e$ and these paths are pairwise edge-disjoint? If there is such a collection of paths then we say that $D$ is \emph{realizable} in $G$. The collection of paths is called a \emph{realization} of $D$ in $G$.

\medskip

This problem has several names in the literature depending on motivation and background. In the terminal-pairability context, sufficient conditions (which guarantee the existence of a realization) and their extremum are sought after. 
In computer science, where the problem is referred to as the \emph{edge-disjoint paths} problem (\textsc{EDP} problem for short), the complexity of constructing the set of edge-disjoint paths is studied (generally both $D$ and $G$ are part of the input). In the following few paragraphs we take a short detour to survey the previous results about the complexity of the \textsc{EDP} problem.

\medskip

The decision version of \textsc{EDP} was first shown to be \textsc{NP}-complete by \citet{MR0471974}.
\citet{MR1309358} proved that for a fixed number of paths the problem is solvable in polynomial time, and the running time was later improved by \cite{MR2885428} (these results are about vertex-disjoint paths, but by moving to the line graph of $G$, edge-disjoint paths become vertex-disjoint).
However, if the number of required paths is part of the input then the problem is \textsc{NP}-complete even for complete (see~\cite{kos}) and series-parallel graphs (see~\cite{Nishi}). The problem is \textsc{NP}-hard even if $G+D$ (the graph obtained by taking the disjoint union of the edge sets) is Eulerian and $D$ consists of at most three set of parallel edges, as shown by \citet{MR1339600}. If no restrictions are made on $G$, then the problem is \textsc{NP}-hard for one set of parallel edges which should be mapped to edge-disjoint paths of length exactly 3, see~\cite{MR2956175}.

\medskip

The edge-disjoint paths problem has many practical applications in telecommunications, VLSI design, network science, see \cite{MR1146902}, for example. Several random and deterministic methods have been developed to solve this problem for special classes of graphs when the number of demand edges is not too high. From a long series of papers we single out the paper of \citet{alon}, where the interested reader can also find a good survey of the earlier developments. 





\begin{theorem*}[Theorem 1.1\ in \cite{alon}]
	Let $G=(V,E)$ be a very strong $d$-regular expander on $n$-vertices. If $D$ is a demand graph with respect to $G$ with at most $\frac{nd}{150\log n}$ edges and maximum degree $d/3$, then there is an online, deterministic, polynomial time algorithm that computes a realization of $D$ in $G$.
\end{theorem*}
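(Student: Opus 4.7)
The plan is to process the demand edges one at a time in an online, greedy fashion: for each demand edge $e=(s,t)$ in turn, find a short $s$--$t$ path in the residual graph $G'$ obtained from $G$ by deleting all edges used by previously routed demands, add it to the realization, and update $G'$. The two things to control throughout this process are (i) every request can be routed, and (ii) the edges used in a routing do not destroy the structural property of $G'$ that is needed to route the next request.

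The natural structural invariant is an expansion/pseudo-random property. If $G$ is a ``very strong'' $d$-regular expander (the proof should use whatever quantitative mixing/expansion condition is hidden in that phrase, e.g.\ a spectral gap bound or a vertex-expansion bound of the form $|N(S)| \geq (1-o(1))d|S|$ for all small $S$), then standard arguments show that in $G'$ any two vertices are joined by many edge-disjoint paths of length $O(\log n)$. The key lemma I would prove is that as long as the total number of edges removed from $G$ is at most a small constant fraction of $|E(G)| = nd/2$, and as long as no vertex has lost more than, say, $d/2$ of its incident edges, then $G'$ still inherits enough expansion that an $s$--$t$ path of length $O(\log n)$ can be constructed deterministically. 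The path-finding itself can be done by a breadth-first double sweep from $s$ and $t$: expansion guarantees that after $O(\log n)$ layers the BFS balls cover more than half the vertices, so they must intersect.

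Next I would verify both invariants by a counting argument. Since $|E(D)| \leq nd/(150\log n)$ and each realized path has length $O(\log n)$, the total number of edges used is at most a small constant fraction of $nd$; thus the global budget on edge removals is respected. The maximum degree constraint $\Delta(D)\leq d/3$ enters here: the only vertices where many routed paths concentrate are the endpoints of demand edges, and at a vertex $v$ the number of paths having $v$ as an endpoint is $\deg_D(v)\leq d/3$, while the internal vertices of any single path contribute only two edges per path, which distribute almost uniformly over $V$ by the mixing property of the expander (and so no vertex accumulates more than $\sim nd \cdot \log n / n = O(d\log n)$ incidences on average; a more careful load-balancing argument, or simply routing through a randomly chosen intermediate vertex replaced by its deterministic pseudorandom analogue, brings every vertex well below $d/2$). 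Together these two estimates show the residual graph remains a good expander throughout, so the greedy online procedure never gets stuck.

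The main obstacle, as usual with deterministic online routing in expanders, is the deterministic distribution of load across internal vertices of the paths: a random choice of a midpoint makes the degree-bound argument trivial, but here one must mimic this deterministically, presumably by exploiting the pseudorandom (mixing) property of the expander to choose a ``balanced'' midpoint explicitly, or by using a potential function that penalizes near-saturated vertices and showing greedy choice on this potential maintains the invariant. All other steps -- short-path existence via expansion, the global edge-budget, polynomial running time of BFS per request -- are then routine.
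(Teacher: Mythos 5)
This statement is not proved in the paper at all: it is Theorem~1.1 of the cited reference \cite{alon}, quoted verbatim as background to motivate the comparison with Theorem~\ref{thm:deg1}. So there is no in-paper proof to compare against, and your task here was really to reprove an external result.

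Judged on its own merits, your sketch reproduces the standard architecture for such results (route each request along a short path in the residual graph, and maintain the invariant that the residual graph is still a good expander), but it has a genuine gap exactly where you flag the ``main obstacle.'' The deterministic, online distribution of load over the \emph{internal} vertices of the paths is not a routine detail to be supplied later --- it is the entire technical content of the cited theorem. Your counting argument only bounds the \emph{average} number of path-edges per vertex; greedy BFS shortest-path routing gives no pointwise guarantee, and an adversarial online ordering of the $nd/(150\log n)$ requests can concentrate the interiors of many paths on a small set of vertices, destroying the expansion invariant long before the global edge budget is exhausted. The assertion that internal vertices ``distribute almost uniformly over $V$ by the mixing property'' is false for deterministically chosen shortest paths; mixing controls edge counts between fixed sets, not where a greedy algorithm chooses to route. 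The phrases ``a more careful load-balancing argument,'' ``a deterministic pseudorandom analogue of a random midpoint,'' and ``a potential function that penalizes near-saturated vertices'' name three possible strategies without carrying out any of them, and the actual argument in \cite{alon} requires a substantially more elaborate mechanism than any of these one-line descriptions. As written, the proposal establishes only that \emph{some} routing of each individual request exists at each step under favorable invariants, not that the invariants can be maintained; the theorem is therefore not proved.
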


Online here means that the algorithm receives the demand edges one-by-one, and designs the paths immediately, without any information on the still forthcoming demand edges.

\medskip

For the special case of complete bipartite base graphs, the upper bound on the number of edges in the demand graph is $\frac{n^2}{75\log(2n)}$. Theorem~\ref{thm:deg1} increases the number of possible demand edges asymptotically by a factor of $\log(2n)$ to $\frac{n^2}{4}$, at the cost of restricting the maximum degree of the demand graph to $(1-o(1))\frac{n}{4}$.

\medskip

The problem is also very closely related to the integer multicommodity flow problems and the theory of graph immersions, each with their own terminologies. In this paper from now on we use the terminology of terminal-pairability, as other papers \cite{TPP92,MR1157508,MR1208923,MR1677781,MR1985088,GyarfasSchelp98,KKGL99,MR3262364,MR3373339,MR3518137,GyMM16Complete} about sufficient conditions do.

\medskip

The terminal-pairability problem arose as a theoretical framework for the practical problem of constructing high throughput packet switching networks. The problem was originally studied by \citet{TPP92}. Their research served as a substrate for further theoretical studies by \citet{GyarfasSchelp98,KKGL99}.

\medskip

In general, it is hopeless to give a condition which is both necessary and sufficient for $D$ to be realizable in $G$, because the problem is \textsc{NP}-hard. Instead, we group the instances of the problem according to the value of a parameter which corresponds to the complexity of it: the maximum degree, or the number of edges of $D$. Given a fixed value of one of these parameters, we are able to give relatively tight conditions which guarantee the existence of a realization. Moreover, for an instance of the problem satisfying these conditions a solution can be constructed in polynomial time.

\medskip

This paper is the latest piece in a series of papers about terminal-pairability \cites{GyMM16Complete,GyMM16Grid,CompleteBip2017}. Our previous paper in the series \cites{CompleteBip2017} also deals with terminal-pairability in complete bipartite base graphs, but with the not very natural restriction that $D$ is bipartite with respect to the same vertex classes as the base graph. The novelty of this paper is that almost the same conditions are sufficient even if the bipartiteness condition on $D$ is omitted.

\medskip

We will refer to an instance of the edge-disjoint paths problem with a pair of graphs $(G,D)$, which implicitly assumes that the underlying sets of vertices of the graphs are identical (or that there is a 1-to-1 correspondence between them).

\medskip

For an edge $e\in E(D)$ with end-vertices $x$ and $y$, we define the \emph{lifting of $e$ to a vertex $z\in V(D)$}, as an operation which transforms $D$ by deleting $e$ and adding a new edge joining the vertices $x$ and $z$ and another new one joining $z$ and $y$; if $z=x$ or $z=y$, the operation does not do anything. We stress that we do not use any information about $G$ to perform a lifting and that the graph obtained using a lifting operation is still a demand graph $D'$. Notice that $D$ is realizable in $G$ a realization if $D'$ is realizable in $G$. Throughout the paper, the demand graphs will be denoted by $D$ and its (indexed) derivatives.

\medskip

It is easy to see that the terminal-pairability problem defined by $G$ and $D$ is solvable if and only if there exists a series of liftings, which, applied successively to $D$, results in a (simple!) subgraph of $G$ (which is a realization of $D$ in $G$). The edge-disjoint paths can be recovered by assigning pairwise different labels to the edges of $D$, and performing the series of liftings so that new edges inherit the label of the edge they replace. Clearly, edges sharing the same label form a walk between the endpoints of the demand edge of the same label in $D$, and so there is also such a path.

\medskip

Several attempts have been made to improve the general result for the case of complete base graphs. One of them is due to \citet{kos}, and an even better extremal bound is proven in~\cite{GyMM16Complete}.

\medskip

In our previous paper~\cite{CompleteBip2017} on terminal-pairability in complete bipartite graphs, we restricted ourselves to demand graphs that are bipartite with respect to the color classes of the base graph.

\medskip

In this paper we explore the terminal-pairability problem when $G$ is a complete symmetric bipartite graph, and the demand graph is assumed to be a loopless multigraph. Our approach is extremal in nature: we are searching for simple conditions on the maximum degree and the number of edges that guarantee realizability.

\medskip

Let $A=\{a_1,\ldots,a_n\}$ and $B=\{b_1,\ldots,b_n\}$. We define $K_{n,n}$ as the graph whose vertex set is $A\cup B$, and its edge set is the set of all unordered $A,B$ pairs. For a (multi-)graph $H$, whose vertex set is $A\cup B$, let $H[A,B]$ be the bipartite (multi-)subgraph of $H$ induced by $A$ and $B$ as the two color classes. For a (multi-)graph $H$, let $\Delta(H)$ be the maximum degree in $H$, and let $e(H)$ be the number of its edges (with multiplicity).

\medskip

A motivation behind taking $K_{n,n}$ as the base graph is that from a bundle of parallel edges between the two color classes, each edge (except at most one) must be mapped to a path of at least 3 edges. If the base graph is a complete graph, multiple edges only need to be mapped to paths of length at least 2, so studying terminal-pairability in complete bipartite graphs is a logical next step for this reason, too.

\medskip

The observation of the previous paragraph and the pigeonhole principle implies that a demand graph containing $\left\lceil\frac{n}{3}\right\rceil+1$ copies of the edge $\{a_i,b_i\}$ for each $i=1,\ldots,n$ is not realizable in $K_{n,n}$:
\[ n\cdot \left(3\cdot\left\lceil\frac{n}{3}\right\rceil+1\right)>e(K_{n,n}).\]
Therefore, the extremal upper bound on the maximum degree of a demand graph realizable in $K_{n,n}$ is at most $\left\lceil\frac{n}{3}\right\rceil$. It is not clear if this is the extremal bound. Our attempts to adapt the proof of \cite{MR3828771} (which improves on the trivial extremal upper bound on the demand degrees in complete graphs) to complete bipartite base graphs have been futile.

\medskip

Although the following theorem is not likely to be sharp, in terms of maximum degree, it approaches the $\left\lceil\frac{n}{3}\right\rceil$ upper bound to a factor of $\frac43$.

\medskip

\begin{theorem}\label{thm:deg1}
  Let $(K_{n,n},D)$ be an instance of the \textsc{EDP} (terminal-pairability) problem, where $D$ is not necessarily bipartite. If
  \[ \Delta(D)\le (1-o(1))\cdot\frac{n}{4} \]
  as $n\to\infty$, then $D$ is realizable in $K_{n,n}$.
\end{theorem}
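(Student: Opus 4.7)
My plan is to reduce the non-bipartite case to the bipartite situation treated in our previous paper~\cite{CompleteBip2017}. Write $E(D) = E(D_{bip}) \cup E(D_A) \cup E(D_B)$, where $D_{bip}$ collects the edges between $A$ and $B$, $D_A$ the edges inside $A$, and $D_B$ the edges inside $B$. I would lift every edge of $D_A$ through a vertex of $B$ specified by a function $\phi: E(D_A) \to B$, and every edge of $D_B$ through a vertex of $A$ specified by $\psi: E(D_B) \to A$. The resulting demand multigraph $D'$ is bipartite on the classes $(A,B)$; since a realization of $D'$ in $K_{n,n}$ pulls back to a realization of $D$, it suffices to choose $\phi,\psi$ so that $D'$ fulfils the degree hypothesis of the bipartite theorem of~\cite{CompleteBip2017}.

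\medskip

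The key bookkeeping is the degree change at a vertex $v\in B$: its $D_{bip}$-edges are untouched, each $D_B$-edge at $v$ merely trades its $B$-endpoint for an $A$-endpoint (contributing the same number of incidences), and each $D_A$-edge lifted through $v$ adds $2$ to its degree. Altogether, $d_{D'}(v) = d_D(v) + 2|\phi^{-1}(v)|$, and symmetrically $d_{D'}(u) = d_D(u) + 2|\psi^{-1}(u)|$ for $u\in A$. The degree-sum identity gives $e(D_A) \le \tfrac{1}{2} n\Delta(D)$, so on average a $B$-vertex receives at most $\tfrac{1}{2}\Delta(D)$ liftings. To control the maximum load, I would distribute the liftings either greedily---always to a currently least-loaded target---or uniformly at random, controlling deviations by a Chernoff-type bound. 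Either method yields $|\phi^{-1}(v)|, |\psi^{-1}(u)| \le (1+o(1))\tfrac{1}{2}\Delta(D)$ for every vertex, because the deviation from the mean is of order $O(\sqrt{n\log n})$, which is $o(n)$.

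\medskip

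Consequently $\Delta(D') \le (2+o(1))\Delta(D) \le (1-o(1))\tfrac{n}{2}$, and an appeal to the bipartite theorem of~\cite{CompleteBip2017} furnishes a realization of $D'$, and hence of $D$, in $K_{n,n}$. The main obstacle is the quantitative matching of the $o(1)$ slack terms: the hypothesis $\Delta(D) \le (1-o(1))\tfrac{n}{4}$ must supply enough headroom to absorb both the (polylogarithmically small) fluctuation of the balanced lifting and the slack demanded by the bipartite theorem itself. In principle this is a routine calibration, but it may force a non-trivial lower-order restriction on how fast $\Delta(D)/n$ may approach $1/4$. A secondary concern is that the lifting may produce parallel edges in $D'$; since the bipartite theorem applies to demand multigraphs, this is harmless but must be verified when invoking it.
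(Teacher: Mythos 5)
Your reduction has a genuine quantitative gap that no calibration of the $o(1)$ slack can repair. The bookkeeping itself is right: lifting every edge of $D[A]$ through a vertex of $B$ and every edge of $D[B]$ through a vertex of $A$ gives a bipartite $D'$ with $d_{D'}(v)=d_D(v)+2|\phi^{-1}(v)|$, and since the total load placed on $B$ is $2e(D[A])$, which can be as large as $n\Delta(D)$ (take $D$ to be $\Delta$-regular with all edges monochromatic), even a perfectly balanced assignment forces $\Delta(D')\ge (2-o(1))\Delta(D)$. So under the hypothesis $\Delta(D)\approx n/4$ you arrive at $\Delta(D')\approx n/2$. But no realizability theorem for bipartite demand multigraphs in $K_{n,n}$ can operate in that range: the demand graph consisting of $\lceil n/3\rceil+1$ copies of each edge $\{a_i,b_i\}$ is itself bipartite and is not realizable (the counting argument from the introduction), so every bipartite sufficient condition is capped at $\Delta\le\lceil n/3\rceil$. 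The black-box appeal to \cite{CompleteBip2017} therefore cannot go through; at best your reduction yields the theorem with $n/6$ in place of $n/4$ (assuming the bipartite result reaches $(1-o(1))n/3$), and a further loss threatens because $D'$ may contain heavy bundles of parallel crossing edges, each of which must be resolved into paths of length $3$.

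The missing idea is that one must not hand off a \emph{multigraph} $D'$ to a bipartite theorem, but instead arrange the liftings so that the final graph is already a \emph{simple} subgraph of $K_{n,n}$. The paper does this in three interleaved steps: first an equitable proper edge $n$-coloring of $D[A]$ is used to lift edges of $D[A]$ to vertices of $A$, cutting the multiplicity inside $A$ down to a constant; then a joint proper coloring of $D[A,B]\cup D[B]$ (Proposition~\ref{prop:ABB}) is used to lift everything touching $B$ into $A$ in such a way that the crossing part becomes simple; finally the accumulated edges inside $A$ are lifted to $B$ via a proper \emph{list} edge coloring (Kahn's theorem, Theorem~\ref{thm:kahn}), where the list of an edge $a_ia_j$ consists of those vertices of $B$ adjacent to neither $a_i$ nor $a_j$, so that no multiple crossing edge is ever created. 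It is this last list-coloring step --- replacing "realize a bipartite multigraph" by "choose intermediate vertices avoiding existing neighbors" --- that saves the factor you lose, and it is where the budget $(1+o(1))\cdot 4\Delta(D)\le n$ comes from.
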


\medskip

Such sufficient conditions are studied in the theory \emph{immersions} as well. It is a relatively new and quickly growing subject of graph theory. In short, a loopless (multi)graph $D$ has an immersion in a graph $G$ if there exists a mapping of $V(D)$ into $V(G)$ so that $D$ has a realization in $G$ with respect to this vertex-map. The foundations have been laid down by \citet{MR2595703,MR2063516,MR1146902,MR1306142}. Recently, \citet{MR3231086,MR3223965,dvorak2015complete} studied the problem of finding sufficient conditions on a simple graph so that it contains an immersion of $K_n$. Theorem~\ref{thm:deg1} (and even more so, the main result in \cite{GyMM16Complete}) studies the converse of this problem: what is a sufficient condition on a loopless multigraph $D$ so that it has an immersion into $K_{n,n}$ (respectively, $K_n$). In comparison, Theorem~2 in \cite{CompleteBip2017} is not invariant on the permutation of $V(D)$, but the technical novelty in the proofs of this paper (compared to \cite{CompleteBip2017}) allows us to reformulate Theorem~\ref{thm:deg1} in the language of immersions.

\begin{corollary}
If $H$ is a loopless multigraph on at most $2n$ vertices with maximum degree at most $(1-o(1))\cdot\frac{n}{4}$, then there is an immersion of $H$ in $K_{n,n}$.
\end{corollary}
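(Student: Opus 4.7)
The plan is to reduce the corollary directly to Theorem~\ref{thm:deg1} by observing that the latter imposes no bipartiteness condition on the demand graph. This is exactly the point the authors emphasize when they contrast Theorem~\ref{thm:deg1} with Theorem~2 of \cite{CompleteBip2017}: since the hypothesis is invariant under permutations of $V(D)$, we are free to place the vertices of $H$ into $A\cup B$ in any way we like.

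Concretely, given a loopless multigraph $H$ with $|V(H)|\le 2n$ and $\Delta(H)\le (1-o(1))\cdot\frac{n}{4}$, I would first fix an arbitrary injection $\phi\colon V(H)\hookrightarrow A\cup B=V(K_{n,n})$, which exists since $|V(H)|\le |A\cup B|=2n$. Using $\phi$, I transport $H$ to a loopless multigraph $D$ on the vertex set $A\cup B$ by declaring that for every edge $xy\in E(H)$ the image $\phi(x)\phi(y)$ is an edge of $D$ with the same multiplicity, and leaving every vertex of $A\cup B$ not in the image of $\phi$ isolated.

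Since $\phi$ is a bijection onto its image and preserves the incidence structure, $\Delta(D)=\Delta(H)\le (1-o(1))\cdot\frac{n}{4}$. Therefore Theorem~\ref{thm:deg1}, which does not require $D$ to respect the bipartition of $K_{n,n}$, applies and produces a realization of $D$ in $K_{n,n}$, i.e., a system of pairwise edge-disjoint paths $P(e)$ joining the endpoints of each $e\in E(D)$. Pulling back through $\phi$, this is precisely an immersion of $H$ into $K_{n,n}$, finishing the proof. There is no real obstacle here beyond the bookkeeping of checking that the vertex-map can be chosen arbitrarily; the whole content of the corollary lies in the fact that Theorem~\ref{thm:deg1} has already been proved without any bipartiteness restriction on $D$.
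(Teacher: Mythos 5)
Your proposal is correct and matches the paper's intended argument: the corollary is stated as an immediate consequence of Theorem~\ref{thm:deg1}, precisely because that theorem places no bipartiteness restriction on $D$ and its hypothesis is invariant under the choice of vertex placement, so one may embed $V(H)$ injectively into $A\cup B$, transport the edges, and apply the theorem. The bookkeeping you supply (the injection exists since $|V(H)|\le 2n$, isolated vertices do not affect $\Delta$, and a realization of the transported demand graph is by definition an immersion of $H$) is exactly what the paper leaves implicit.
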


\medskip

If a significant amount of the edges of $D$ are inside the two color classes, Theorem~\ref{thm:deg2} permits even higher degrees in $D$.

\medskip

\begin{theorem}\label{thm:deg2}
  Let $(K_{n,n},D)$ be an instance of the \textsc{EDP} problem. If
  \[ \Delta(D)\le (1-o(1))\cdot\left(\frac{2n}{7}-\frac37\cdot \frac{e(D[A,B])}{n}\right) \]
  as $n\to\infty$, then $D$ is realizable in $K_{n,n}$.
\end{theorem}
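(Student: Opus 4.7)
The plan is to reduce Theorem~\ref{thm:deg2} to a bipartite terminal-pairability statement, such as the main result of \cite{CompleteBip2017}, by a sequence of lifting operations that eliminates the monochromatic part of $D$. For each edge $\{u,v\} \in D[A]$ (respectively $D[B]$) I would lift it through a carefully chosen intermediate vertex $w \in B$ (resp.\ $A$); the resulting demand graph $D'$ is bipartite on $(A,B)$, and any realization of $D'$ in $K_{n,n}$ induces one of $D$.

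The critical step is the degree bookkeeping. A vertex $w \in B$ used as intermediate for $k(w)$ edges of $D[A]$ has $\deg_{D'}(w) = \deg_D(w) + 2k(w)$, with $\sum_{w\in B} k(w) = e(D[A])$. Routing each lifting through a currently minimum-degree vertex of the opposite class yields, modulo lower-order terms,
\[ \Delta(D') \le \max\Bigl(\Delta(D),\ \tfrac{1}{n}\bigl(e(D[A,B]) + 2e(D[A]) + 2e(D[B])\bigr)\Bigr). \]
Combined with the degree-sum bounds $2e(D[A]) + e(D[A,B]) \le n\Delta(D)$ and its $B$-analog, this reduces to $\Delta(D') \le 2\Delta(D) - e(D[A,B])/n$ in the regime where the second argument of the max dominates. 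Feeding this directly into a one-parameter bipartite threshold of the form $\Delta(D') \le \alpha n$ produces a bound in which $e(D[A,B])$ has the opposite sign from the one in Theorem~\ref{thm:deg2}; I therefore expect the proof to invoke a two-parameter bipartite result (a strengthening of \cite{CompleteBip2017}) that simultaneously constrains $\Delta(D')$ and $e(D') = e(D[A,B]) + 2(e(D[A])+e(D[B]))$. The coupling of a degree bound and an edge-count bound is what I expect to force the specific coefficients $\tfrac{2}{7}$ and $\tfrac{3}{7}$.

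The hardest part is making the balancing step sufficiently precise. Greedy lifting is easy to describe but does not, on its own, reach the averaged bound above when the original degree sequence is very uneven, so a preprocessing phase that first lifts edges away from the highest-degree vertices will likely be needed. Additionally, pairs $\{a,b\}$ with high multiplicity in $D[A,B]$ cannot all be realized by direct edges of $K_{n,n}$ and must be broken up by length-$3$ (double) liftings; the extra degree these consume has to be absorbed into the same combined degree-and-edge budget, and verifying that this is still feasible under the hypothesis of Theorem~\ref{thm:deg2} is the main technical obstacle.
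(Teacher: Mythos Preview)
Your approach has a genuine gap: you correctly observe that a naive reduction to a one-parameter bipartite result from \cite{CompleteBip2017} produces the wrong sign on $e(D[A,B])$, but your proposed fix --- invoking a hypothetical two-parameter bipartite threshold --- is pure speculation. No such strengthening is proved (or cited), and the rest of your outline depends on it. Without that black box, the argument does not close.

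The paper does \emph{not} reduce to \cite{CompleteBip2017}. Instead it constructs the realization directly via edge colorings. First, an equitable proper $n$-coloring of $D[A,B]\cup D[B]$ (Proposition~\ref{prop:ABB}) is used to lift each color class to the corresponding vertex of $A$; this simultaneously empties $D[B]$ and makes the bipartite part simple, at the cost of increasing degrees inside $A$ by roughly $e(D[A,B])/n$. The remaining problem is to lift all edges of $D'[A]$ to $B$ without creating parallel edges, which is a list-edge-coloring problem on $D'[A]$ with lists of size $\ge n - 2\Delta(D)$. Shannon's theorem gives $\chi'(D'[A]) \le \tfrac{3}{2}\bigl(\Delta(D)+e(D[A,B])/n\bigr)$, and Kahn's theorem converts this to a list bound. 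Comparing $\tfrac{3}{2}\bigl(\Delta(D)+e(D[A,B])/n\bigr)$ with $n-2\Delta(D)$ yields exactly $\tfrac{7}{2}\Delta(D)+\tfrac{3}{2}e(D[A,B])/n \le n$, i.e., the coefficients $\tfrac{2}{7}$ and $\tfrac{3}{7}$. The negative sign on $e(D[A,B])$ arises because those edges add load to $A$ in the first phase (they are lifted \emph{into} $A$, not out of it), which is the opposite of what your monochromatic-first scheme does.
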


Additionally, we prove a sharp bound on the maximum number of edges in a realizable demand graph:

\begin{theorem}\label{thm:edge}
  Let $n\geq 2$ and $(K_{n,n},D)$ be an instance of the \textsc{EDP} problem. If $D$ has at most $2n-3$ edges and $\Delta(D)\le n$, then $D$ is realizable in $K_{n,n}$.
\end{theorem}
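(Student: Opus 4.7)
The plan is to prove Theorem~\ref{thm:edge} by an inductive, constructive argument: we process the edges of $D$ one at a time and route each via an edge-disjoint short path (length $1$, $2$, or $3$) in $K_{n,n}$.

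The key structural observation is that the hypothesis $e(D)\leq 2n-3$ forces $D$ to have many low-degree vertices. Indeed, if every vertex of $D$ had degree at least $2$, then $e(D)\geq 2n > 2n-3$, a contradiction, so $D$ has at least three connected components (counting isolated vertices as components). This provides a reservoir of ``free'' vertices to serve as intermediate nodes in routing paths, and it also shows that the sparse instances we face are structurally far from the degree-regime of Theorems~\ref{thm:deg1} and~\ref{thm:deg2}.

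I would process the demand edges in three phases. First, each bipartite demand edge $\{x,y\}$ (with $x\in A$, $y\in B$) that occurs with multiplicity one in $D$ is routed through its direct base edge $xy$, using a single base edge. Second, each non-bipartite demand edge (with both endpoints in the same color class, say $A$) is routed through a length-$2$ path $x-b-y$ using an intermediate vertex $b$ in the opposite color class. Third, each remaining bipartite multi-edge is routed through a length-$3$ detour $x-b-a-y$. At each step, the intermediate vertex (or pair of vertices) is chosen so that the base edges used in this routing are distinct from those used in previous routings. An alternative, cleaner route is to first lift every non-bipartite edge of $D$ to a carefully selected vertex of the opposite color class (keeping $\Delta(D)\leq n$ intact) so as to reduce to a bipartite demand graph, and then invoke the bipartite analogue from~\cite{CompleteBip2017}.

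The main technical obstacle is verifying that the greedy routing always succeeds, i.e., that a valid intermediate vertex exists at every step. This amounts to a degree-counting argument: after $k$ demand edges have been realized, at most $3k\leq 6n-9$ base edges have been used, and the ``used degree'' at any vertex can be bounded in terms of $\deg_D$ and the number of routing paths passing through that vertex as an intermediate. The most delicate sub-case arises when a vertex $v$ has $\deg_D(v)$ close to $n$ with several multi-edges or non-bipartite edges incident to $v$: then the base edges at $v$ become nearly exhausted, and one must prioritize the routing of edges at $v$ first, systematically exhausting its full neighborhood in $K_{n,n}$ and choosing a permutation $\sigma$ of intermediate vertices that leaves enough slack for the remaining demand edges. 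Finally, the sharpness of the bound $2n-3$ should be witnessed by exhibiting an explicit demand graph with $2n-2$ edges and $\Delta(D)\leq n$ that cannot be realized in $K_{n,n}$, confirming that the upper bound cannot be improved.
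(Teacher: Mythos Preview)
Your plan has a genuine gap in both of its branches.

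\textbf{The ``alternative cleaner route'' does not work.} Lifting a monochromatic edge replaces one edge by two, so if $D$ has $k$ monochromatic edges and $m\le 2n-3$ edges in total, the bipartite demand graph you produce has $m+k$ edges. In the near-extremal example---$(n-1)$ parallel copies of $a_1a_2$ together with $(n-2)$ parallel copies of $b_1b_2$---every edge is monochromatic, so after lifting you are at $4n-6$ edges. This is far above the edge bound in the bipartite theorem of~\cite{CompleteBip2017}, so you cannot simply invoke that result. You would also have to keep $\Delta\le n$ through the liftings, which adds a further layer you have not addressed.

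\textbf{The greedy edge-by-edge routing is not justified.} The global count ``at most $3k\le 6n-9$ base edges used'' is beside the point: each vertex of $K_{n,n}$ has only $n$ incident base edges, and when $\deg_D(v)=n$ all of them are consumed just by paths terminating at $v$. Your remedy ``prioritize $v$ first'' is exactly where the difficulty lies, and you have not shown that processing one high-degree vertex does not ruin another. Concretely, there can be up to three vertices of degree $\ge n-1$ (since $\sum d=4n-6$), pairwise adjacent, possibly with large multiplicities between them; a naive greedy order can exhaust the neighbourhoods of two of them in a way that blocks the third. This is the scenario where real case analysis is needed, and your sketch does not supply it. (A minor aside: your ``three components'' claim is true, but not for the reason you give---the degree inequality only yields a vertex of degree $\le 1$; the component count follows instead from the fact that a graph on $2n$ vertices with at most $2n-3$ edges has at least three components.)

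\textbf{How the paper proceeds.} The paper does not route edges one by one. It argues by induction on $n$: it selects a vertex $u$ of maximum degree (after first disposing of the special case of three degree-$n$ vertices in one class), \emph{resolves} $u$---lifting each multiple or monochromatic edge at $u$ to a fresh vertex of the opposite class so that $u$ now sees only simple crossing edges---then resolves a suitable partner $v$ in the other class, and deletes the pair $\{u,v\}$. The point is that after deletion the remaining demand graph sits on $K_{n-1,n-1}$, has at most $2(n-1)-3$ edges, and has maximum degree at most $n-1$; the last two facts require a short but genuine case analysis on $\gamma_B(u)$ and on whether $u$ has neighbours in its own class. This vertex-peeling induction is what replaces your missing argument: it handles the high-degree vertices structurally rather than trying to control a greedy process.
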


The assumption $\Delta(D)\le n$ is trivially necessary: at any given vertex, there can be at most $n$ edge-disjoint paths that terminate there. The result is sharp, as shown by the demand graph on $2n-2$ edges consisting of two bundles of $n-1$ edges, where one of the bundles joins an arbitrary pair of vertices in $A$, while the other bundle joins a pair in $B$.

\medskip

The following \textsc{NP}-hardness is result probably well-known, but we have not been able to find a reference for it. For completeness' sake, we include a short reduction.

\medskip 

\begin{proposition}
	The terminal-pairability or edge-disjoint paths problem for $K_{n,n}$ is \textsc{NP}-hard.
\end{proposition}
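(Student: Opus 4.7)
The plan is to give a polynomial-time reduction from the NP-hard EDP problem in complete graphs $K_n$, as proved by Kosowski~\cite{kos}. Given an instance $(K_n, D)$ (we may assume $|D|\le\binom{n}{2}$, otherwise $D$ is trivially not realizable), I construct an instance $(K_{N,N}, D')$ with $N=\binom{n}{2}$ as follows. Let $A$ consist of $a_1,\dots,a_n$ (identified with $V(K_n)$) together with $N-n$ padding vertices, and let $B=\{b_e : e\in E(K_n)\}$, so $|A|=|B|=N$. The \emph{canonical} realization of a $K_n$-edge $\{v_i,v_j\}$ is the length-$2$ path $a_i - b_{\{v_i,v_j\}} - a_j$ in $K_{N,N}$. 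The demand $D'$ comprises (i) the demand $D$ placed as an $A$-$A$ multigraph on $\{a_1,\dots,a_n\}$; (ii) for every $b_e$ with $e=\{v_i,v_j\}$ and every real $a_k$ with $k\notin\{i,j\}$, an auxiliary demand edge $\{a_k,b_e\}$; (iii) for every padding vertex $a'\in A$ and every $b_e\in B$, an auxiliary demand edge $\{a',b_e\}$. A routine count gives $\deg_{D'}(b_e)=N-2$ at every $b_e$ and $\deg_{D'}(a')=N$ at every padding vertex.

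For the forward direction, given an edge-disjoint realization of $D$ in $K_n$, I would expand each used $K_n$-edge into its canonical length-$2$ path in $K_{N,N}$ and realize every auxiliary demand by its direct edge. Edge-disjointness in $K_n$ guarantees that each $b_e$ hosts at most one canonical path, so the resulting family of paths in $K_{N,N}$ is edge-disjoint.

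The main obstacle is the backward direction. The essential tool is the degree count $\deg_{K_{N,N}}(b_e)-\deg_{D'}(b_e)=2$, which forces at most one passing path through any $b_e$. The delicate step is a local exchange/rerouting argument at each $b_e$ --- iteratively shortening terminal paths by swapping ``crossed'' first edges, and redirecting any non-canonical passing $a_u - b_e - a_v$ with $\{u,v\}\neq\{i,j\}$ into the canonical $a_i - b_e - a_j$ while absorbing the removed segment into the affected $D$-path --- showing that any realization of $D'$ can be transformed into a \emph{canonical} one in which every auxiliary demand is realized by its direct edge and the unique passing through each $b_e$ (if present) equals $a_i - b_e - a_j$ with $e=\{v_i,v_j\}$. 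In such a canonical realization, every $D$-path decomposes into canonical length-$2$ segments; projecting each segment $a_s - b_e - a_t$ to the corresponding $K_n$-edge $e$ produces a walk in $K_n$, and since distinct canonical passings use distinct $b_e$'s and hence distinct edges of $K_n$, these walks are edge-disjoint. Shortcutting them to simple paths yields an edge-disjoint realization of $D$ in $K_n$. Making the normalization step fully rigorous is the technical heart of the proof; the polynomial size of $(K_{N,N},D')$ and the forward direction are routine.
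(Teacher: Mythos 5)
Your reduction is essentially identical to the paper's: both reduce from the \textsc{NP}-hard \textsc{EDP} problem on $K_n$~\cite{kos}, blow up to $K_{\binom{n}{2},\binom{n}{2}}$ with one $B$-vertex assigned to each pair of original vertices, and saturate every non-canonical edge at each $b$-vertex with an auxiliary single-edge demand so that the two remaining free base edges force any path through that vertex to be the canonical length-2 detour. The normalization step you flag as the technical heart is handled in the paper by the same local exchange you sketch --- whenever an auxiliary demand edge $e$ is used by another path $P(f)$, replace $P(f)$ by $P(f)\cup P(e)\setminus\{e\}$ and let $e$ realize itself, pruning cycles --- so your proposal is at the same level of rigor as the published argument.
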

\begin{proof}
	The EDP problem is \textsc{NP}-hard on complete base graphs~\cite{kos}.
	Take an instance of the edge-disjoint paths problem $(K_n,D)$ on the vertex set $\{a_1,\dots, a_n\}$. Take $G=K_{\binom{n}{2},\binom{n}{2}}$, such that its vertex classes are $\{a_1,\ldots,a_{\binom{n}{2}}\}$ and $\{b_1,\ldots,b_{\binom{n}{2}}\}$
	Let $(G,D')$ be an instance of the \textsc{EDP} problem, where
	\[ E(D')=E(D)\cup E(G)\setminus \left\{(a_i,b_j),(b_j,a_k)\ :\ 1\le i<k\le n,\  j=i-1+\sum_{l=0}^{k-1}l\right\}. \]
	Obviously, if $D$ has a realization in $K_n$ then we may lift an edge $\{a_i,a_k\}$ of the realization of $D$ to the vertex $b_j$ such that $j=i-1+\sum_{l=0}^{k-1}l$. If $D'$ has a realization in $G$ then it also has such a realization where each edge
	\[ e\in E(G)\setminus \left\{(a_i,b_j),(b_j,a_k)\ :\ 1\le i<k\le n,\  j=i-1+\sum_{l=0}^{k-1}l\right\} \]
	is mapped to a path of one edge, i.e., itself: this can clearly be done, if $e$ is not in the realization of $D'$; if it is, we can simply modify a path $P(f)\ni e$ by replacing $P(f)$ with $P(f)\cup P(e)\setminus \{e\}$ (some cycles may have to be pruned). Such a solution trivially corresponds to a realization of $D$ in $K_n$. The described reduction is polynomial.
\end{proof}

\section{Proofs of the degree versions (Theorem~\ref{thm:deg1}~and~\ref{thm:deg2})}

Before we proceed to prove the theorems, we state several definitions and four well-known results about edge colorings of multigraphs.

\medskip

Let $H$ be a loopless multigraph.
Recall, that the \emph{chromatic index} of $H$ (also known as the \emph{edge chromatic number}), denoted by $\chi'(H)$, is the minimum $k$ such that there is a proper $k$-coloring of the edges of $H$. An \emph{equitable edge coloring} of $H$ is a proper coloring of the edges $E(H)$ such that the sizes of the color classes differ by at most one. The \emph{list chromatic index} of $H$ (also known as the \emph{list edge chromatic number}), denoted by $\mathrm{ch}'(H)$, is the smallest integer $k$ such that if for each edge of $H$ there is a list of $k$ different colors given, then there exists a proper coloring of the edges of $H$ where each edge gets its color from its list.

\medskip

The \emph{maximum multiplicity} $\mu(H)$ is the maximum number of edges joining the same pair of vertices in $H$. The number of edges joining a vertex $x\in V(H)$ to a subset $A\subseteq V(H)$ of vertices is denoted by $e_H(x,A)$. The \emph{set of neighbors} of $x$ in $H$ is denoted by $N_H(x)$. For other notation the reader is referred to~\cite{Diestel}.

\medskip

\begin{proposition}\label{prop:equitable}
  If $H$ is a multigraph and $\chi'(H)\le k$ for some integer $k$, then there is an equitable edge coloring of $H$ with exactly $k$ colors.
\end{proposition}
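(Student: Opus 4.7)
My plan is to use an extremal argument on the space of proper $k$-edge-colorings. By hypothesis $\chi'(H)\le k$, so there exists at least one proper edge coloring $c$ of $H$ using colors $\{1,\ldots,k\}$ (some classes may be empty). Among all such colorings, I would choose one that minimizes the potential function $\Phi(c)=\sum_{i=1}^{k}|C_i|^2$, where $C_i$ denotes the $i$-th color class. I claim such a minimizer is automatically equitable; supposing otherwise yields a contradiction by a Kempe-chain swap.

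Concretely, if $c$ is not equitable, then there exist indices $i,j$ with $|C_i|\ge |C_j|+2$. Consider the subgraph $H_{ij}=(V(H),C_i\cup C_j)$ consisting of the edges of these two color classes. Every vertex of $H_{ij}$ has degree at most $2$ (at most one edge of each color, since $c$ is proper), so each connected component of $H_{ij}$ is either a path or an even cycle with edges alternating between colors $i$ and $j$. Even cycles and even-length paths contribute equal numbers of edges to $C_i$ and $C_j$; only odd-length paths exhibit an excess of one color. Since $|C_i|-|C_j|\ge 2$, at least one component $P$ of $H_{ij}$ must be an odd-length path in which color $i$ appears one more time than color $j$.

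Now I would swap the colors $i$ and $j$ on the single component $P$. Because $P$ is a connected component of $H_{ij}$, no vertex outside $P$ is incident to an edge of $P$, so the resulting coloring $c'$ is still proper. The new class sizes satisfy $|C_i'|=|C_i|-1$ and $|C_j'|=|C_j|+1$, while all other classes are unchanged. A direct computation gives
\[ \Phi(c')-\Phi(c)=-2(|C_i|-|C_j|)+2\le -2, \]
contradicting the minimality of $\Phi(c)$. Hence the minimizer must be equitable, which proves the proposition.

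The only technical point to watch is that the statement requires \emph{exactly} $k$ color classes; this is automatic, since we are free to include empty classes in the initial coloring, and the equitable condition then forces every class to have size $\lfloor e(H)/k\rfloor$ or $\lceil e(H)/k\rceil$. I do not anticipate any real obstacle here: the argument is a standard Kempe-chain recoloring and the extremal step is routine. The only mild subtlety is verifying that in $H_{ij}$ one really finds an odd path favoring color $i$ whenever $|C_i|-|C_j|\ge 2$, but this follows immediately from the alternating structure on components of maximum degree $2$.
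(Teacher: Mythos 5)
Your argument is correct and is essentially the same as the paper's: both examine the components of the union of two color classes whose sizes differ by at least two, locate an odd alternating path, and swap colors along it to decrease the potential $\sum_i |C_i|^2$ (the paper iterates the swap until no such pair remains, while you take a global minimizer — a cosmetic difference). One tiny point worth making explicit in the multigraph setting, as the paper does, is that two parallel edges of the two colors form a $2$-cycle component, which your phrase ``even cycle'' already covers.
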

\begin{proof}
  Let $c:E(H)\to \{1,2,\ldots,k\}$ be a proper edge coloring of $H$. Suppose there are two colors $x$ and $y$ for which $|c^{-1}(x)|\geq |c^{-1}(y)|+2$. The connected components of $H_{x,y}=c^{-1}(x)\cup c^{-1}(y)$ are cycles (where two parallel edges are regarded as a 2-cycle) and paths. In any cycle of $H_{x,y}$, the number of edges of color $x$ is equal to the number of edges of color $y$, therefore $H_{x,y}$ must contain a path component of odd length, with one more edge of color $x$ than of color $y$.
  By switching the two colors in this path, the sum $\sum_{i=1}^k|c^{-1}(i)|^2$ decreases, and we end up with a coloring which is still proper. Thus, if we cannot repeat this procedure anymore, $c$ must be an equitable coloring, as desired.
\end{proof}

We will use the following well-known results about the edge colorings of multigraphs.

\begin{proposition}[Greedy edge coloring]\label{prop:greedy}
	For any multigraph $H$ we have
	\[ \chi'(H)\le\mathrm{ch}'(H)\le 2\Delta(H)-1. \]
\end{proposition}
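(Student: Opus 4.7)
The plan is to prove the two inequalities separately. The first, $\chi'(H)\le \mathrm{ch}'(H)$, is immediate from the definitions: if lists of size $k$ always admit a proper edge coloring, then in particular assigning the list $\{1,\ldots,k\}$ to every edge does, which yields $\chi'(H)\le k$. Taking $k=\mathrm{ch}'(H)$ gives the claim.

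For the second inequality I would argue by a standard greedy list-coloring argument. Fix an arbitrary assignment of lists of size $2\Delta(H)-1$ to the edges of $H$ and enumerate the edges as $e_1,\ldots,e_m$ in any order. Process them one by one; when $e_i=uv$ is being colored, the edges already colored that can conflict with it are precisely those sharing the endpoint $u$ or $v$ with $e_i$. There are at most $(\deg_H(u)-1)+(\deg_H(v)-1)\le 2\Delta(H)-2$ such edges, so at most $2\Delta(H)-2$ colors are forbidden on $e_i$. Since the list of $e_i$ contains $2\Delta(H)-1$ colors, at least one admissible color remains and the greedy step succeeds. Completing the process produces a proper list coloring, hence $\mathrm{ch}'(H)\le 2\Delta(H)-1$.

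There is no genuine obstacle here — both steps are elementary. The only point worth double-checking is the degree bound counting the conflicting edges at $u$ and $v$, which holds in multigraphs as well (parallel edges at $u$ are counted in $\deg_H(u)-1$, and the loopless assumption keeps the sums honest). Combining the two inequalities finishes the proof.
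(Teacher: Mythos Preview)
Your proof is correct and is exactly the standard greedy argument the proposition's name refers to. The paper itself does not supply a proof of this proposition; it is listed among several well-known edge-coloring facts and left without justification, so there is nothing further to compare.
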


%

\medskip

\begin{theorem}[\citet{Vizing65}]\label{thm:vizing}
  For any multigraph $H$, its chromatic index
  \[ \chi'(H)\le \Delta(H)+\mu(H). \]
\end{theorem}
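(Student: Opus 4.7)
The plan is to prove the theorem by induction on $|E(H)|$, using the classical technique of Vizing fans combined with a Kempe-chain recoloring argument. Write $k = \Delta(H) + \mu(H)$. The base case $E(H) = \emptyset$ is immediate, so we may fix any edge $e_0 = uv_0$ and, noting that $\Delta(H - e_0) \le \Delta(H)$ and $\mu(H - e_0) \le \mu(H)$, invoke the inductive hypothesis to obtain a proper $k$-edge-coloring $c$ of $H - e_0$. The goal is to modify $c$ on a small subgraph so that it extends to a proper $k$-edge-coloring of all of $H$.

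For every vertex $w$, call a color \emph{missing at} $w$ if it is not used by $c$ on any edge of $H - e_0$ incident to $w$. Since $\deg_{H - e_0}(w) \le \Delta(H)$, at least $\mu(H)$ colors are missing at each vertex, and at least $\mu(H)+1$ are missing at $u$ and at $v_0$. Construct a \emph{Vizing fan} at $u$: a maximal sequence of pairwise distinct edges $e_0, e_1, \ldots, e_t$ with $e_i = uv_i$ (the $v_i$ need not be distinct in a multigraph), together with a sequence of colors $\alpha_0, \alpha_1, \ldots, \alpha_t$, such that $\alpha_i$ is missing at $v_i$ for every $i$ and $c(e_i) = \alpha_{i-1}$ for every $i \ge 1$. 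Since the colors $\alpha_0,\ldots,\alpha_{t-1}$ are pairwise distinct (the $e_i$ are), the fan has length at most $\deg_H(u)$, so the construction terminates.

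If at some point $\alpha_i$ is missing at $u$, we \emph{shift the fan}: recolor $e_j$ with $\alpha_j$ for $j = 0, 1, \ldots, i$. This preserves properness and extends $c$ to $H$, finishing the proof. Otherwise, maximality forces the missing color $\alpha_t$ at $v_t$ to be already used at $u$, hence $\alpha_t = c(e_j) = \alpha_{j-1}$ for some $j \in \{1, \ldots, t\}$, so $\alpha_t$ is missing simultaneously at $v_{j-1}$ and $v_t$. Now fix any color $\beta$ missing at $u$ (which exists since $\mu(H)+1 \ge 1$ colors are missing at $u$) and consider the subgraph of $H - e_0$ formed by the edges of colors $\alpha_t$ and $\beta$; its components are paths and even cycles. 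Follow the $\{\alpha_t,\beta\}$-Kempe chain $P$ starting from $v_t$ and swap the two colors along $P$. A case analysis on where $P$ terminates --- at $u$, at $v_{j-1}$, or at some other vertex --- shows that exactly one of the two truncated fans (up to $v_{j-1}$ or up to $v_t$) becomes shiftable after the swap, yielding the desired extension of $c$.

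The delicate step is the Kempe-chain analysis: the path $P$ may pass through $u$ or reach $v_{j-1}$, and in each case the subsequent shift must not create a conflict on any of the fan edges still present. The role of $\mu(H)$ in the statement is precisely to guarantee the abundance of colors missing at $u$, so that $\beta$ can be chosen free of such conflicts; in the simple-graph case $\mu=1$ already suffices, but the extra slack $+\mu(H)$ is exactly what is needed to accommodate parallel edges in the fan.
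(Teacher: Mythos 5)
The paper does not prove this statement: it is quoted as a classical result with a citation to Vizing (1965), so there is no in-paper argument to compare yours against. Judged on its own terms, your outline picks the right family of techniques (Vizing fans plus Kempe chains), but as written it is the simple-graph $\chi'\le\Delta+1$ proof with a parenthetical remark that the $v_i$ may repeat, and the two points where multigraphs genuinely differ are exactly the points you delegate to ``a case analysis''.

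Concretely, the missing idea is a rule forcing the colors $\alpha_i$ attached to different occurrences of the same fan vertex to be pairwise distinct. A vertex $v$ can occur up to $\mu(uv)$ times among the $v_i$; if two occurrences $v_j=v_{j'}$ receive the same missing color $\alpha_j=\alpha_{j'}$, then your shift assigns that one color to the two parallel edges $e_j$ and $e_{j'}$, both joining $u$ to $v_j$, so the shifted coloring is not proper. The same rule is what protects the endgame: without it the terminal coincidence $\alpha_t=\alpha_{j-1}$ can occur with $v_{j-1}=v_t$, and then the Kempe-chain dichotomy (``the path starting at $u$ cannot end at both $v_{j-1}$ and $v_t$'') collapses, since there is only one vertex there, not two. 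The hypothesis $k=\Delta(H)+\mu(H)$ is precisely what makes the distinctness rule implementable: each fan vertex $v_i$ misses at least $k-\Delta(H)=\mu(H)\ge\mu(uv_i)$ colors, one for each of its occurrences. Your closing explanation attributes the role of $\mu(H)$ to the supply of colors missing at $u$, which is not where the difficulty lies --- the argument only ever uses a single missing color $\beta$ at $u$, and $\Delta(H)+1$ colors already guarantee that. So the sketch, carried out literally, fails at the shift step and in the terminal case; with the distinctness rule added (plus the routine check that the Kempe swap leaves the fan colors and the relevant missing colors intact) it becomes the standard proof of this bound.
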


\medskip

\begin{theorem}[\citet{Shannon1949}]\label{thm:shannon}
  For any multigraph $H$, its chromatic index
  \[ \chi'(H)\le \frac32\Delta(H). \]
\end{theorem}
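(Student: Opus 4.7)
The plan is to prove $\chi'(H)\le k:=\lfloor 3\Delta/2\rfloor$, where $\Delta:=\Delta(H)$, by induction on $|E(H)|$. The base case $|E(H)|=0$ is vacuous. For the inductive step, pick any edge $e=uv\in E(H)$; since $\Delta(H-e)\le\Delta$, the induction hypothesis yields a proper $k$-edge-coloring $c$ of $H-e$, and it suffices to extend $c$ to $e$, possibly after some recoloring.

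Let $F(x)\subseteq\{1,\dots,k\}$ denote the set of colors missing at a vertex $x$. Since $\deg_{H-e}(u),\deg_{H-e}(v)\le\Delta-1$, we have $|F(u)|,|F(v)|\ge k-\Delta+1$. If $F(u)\cap F(v)\neq\emptyset$, color $e$ with any common missing color and finish. Otherwise, fix $\alpha\in F(u)$ and $\beta\in F(v)$; the subgraph of $H-e$ spanned by the edges colored $\alpha$ or $\beta$ has maximum degree at most $2$, hence its component through $u$ is a path $P$ whose first edge (if any) is colored $\beta$. If $v\notin V(P)$, swap $\alpha$ and $\beta$ along $P$: now $\beta$ lies in $F(u)\cap F(v)$ and we are done.

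The genuinely hard case is when, for every admissible choice of $\alpha\in F(u)$ and $\beta\in F(v)$, the $\alpha\beta$-Kempe chain out of $u$ terminates at $v$. Here the approach is a Vizing-style fan argument based at $u$: iteratively build a maximal sequence of edges $e_1,\dots,e_t$ incident to $u$ with endpoints $u,y_1,\dots,y_t$, chosen so that the color $c(e_{i+1})$ lies in $F(y_i)$. A rotation of colors along the fan (combined with a suitable Kempe swap) either directly produces a free color at $e$, or forces a long collection of forbidden Kempe chains whose existence is incompatible with the combined color budget at $u$ and at the $y_i$. This is precisely where the constant $3/2$ enters: the bound $k\ge\lfloor 3\Delta/2\rfloor$ is exactly what is needed to guarantee that $|F(u)|+|F(y_i)|$ exceeds $k$ often enough to force the fan to close up favorably.

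The main obstacle is the fan analysis in this last case, where one must carefully track the interplay between multiplicities, Kempe chains, and the available color slack. Since this is the classical theorem of Shannon, I would execute the standard fan argument and otherwise refer to Shannon's original paper for the bookkeeping details.
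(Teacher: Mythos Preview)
The paper does not prove Shannon's theorem; it is quoted as a classical result (with a citation to Shannon, 1949) and used as a black box in the proofs of Theorems~\ref{thm:deg2} and~\ref{thm:approxMaxEDP}. So there is no in-paper argument to compare against, and in that sense you are already doing more than the paper does.

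That said, your sketch has a real gap. The easy cases (a common missing color, or an $\alpha\beta$-Kempe chain from $u$ that misses $v$) are handled correctly, but the ``genuinely hard case'' is not proved: you wave at a Vizing-style fan and then defer to Shannon's paper. More importantly, your stated reason for the constant $3/2$ is wrong. You claim that $k=\lfloor 3\Delta/2\rfloor$ is exactly what forces $|F(u)|+|F(y_i)|>k$; but $|F(u)|+|F(y_i)|\ge (k-\Delta+1)+(k-\Delta)=2k-2\Delta+1$, and this exceeds $k$ only when $k\ge 2\Delta$, which is the greedy bound, not Shannon's. The $3/2$ actually enters via a \emph{three}-vertex pigeonhole: if the $\alpha\beta$-chain from $u$ ends at $v$, let $w$ be the neighbor of $v$ on that chain (so $c(vw)=\alpha$); then
\[
|F(u)|+|F(v)|+|F(w)|\ \ge\ 2(k-\Delta+1)+(k-\Delta)\ =\ 3k-3\Delta+2\ >\ k
\]
holds precisely when $k=\lfloor 3\Delta/2\rfloor$. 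Hence two of $F(u),F(v),F(w)$ meet. If $F(v)\cap F(w)\ni\gamma$, recolor $vw$ with $\gamma$ and then $uv$ with $\alpha$. If only $F(u)\cap F(w)\ni\gamma$, one $\beta\gamma$-Kempe swap from $w$ (followed by the same recoloring trick) finishes the extension. No iterated fan is needed; the argument closes after a single step, and this is where your outline both overcomplicates and misidentifies the crux.
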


\medskip

\begin{theorem}[\citet{Kahn00}]\label{thm:kahn}
  For any multigraph $H$, its list chromatic index
  \[ \mathrm{ch}'(H)\le (1+o(1))\chi'(H). \]
\end{theorem}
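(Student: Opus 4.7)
The plan is to apply the semi-random (Rödl nibble) method, which is the standard route to asymptotic list coloring theorems of this shape. The starting point is a fractional edge coloring of $H$: by Edmonds' matching polytope theorem, $H$ admits a decomposition into matchings with nonnegative weights summing to at most $\chi'(H)$. I would use these fractional weights as the probability distribution for assigning tentative colors in the nibble, rather than sampling colors uniformly from each list.

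Concretely, I would proceed in many small stages. In each stage, every still-uncolored edge tentatively picks a random color from its current list (with a small activation probability $p$). An edge keeps its color only if no edge sharing a vertex with it picks the same color; successfully colored edges are removed, and the used colors are deleted from the lists of all incident edges. The key probabilistic step, proved via Talagrand's inequality, is that after $k$ rounds the residual degree at each vertex and the residual list size at each edge both shrink by approximately the same factor $(1-p)^k$, so their ratio is preserved. After enough rounds the residual degree is much smaller than the residual list size; the leftover edges can then be colored greedily, or via the Lovász Local Lemma, using the accumulated slack.

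The main obstacle is the probabilistic bookkeeping: one must establish simultaneous concentration for many random variables (one per vertex for the residual degree, one per edge for the residual list size) that are not mutually independent, and carry the estimates through all rounds of the nibble. A further subtlety is that $H$ is a multigraph rather than a simple graph, so parallel edges induce large cliques in the line graph on which the nibble operates; this is handled by taking the tentative color distribution to be given by the fractional matching decomposition above, so that parallel edges are never activated with the same color in a single round and their effect averages out. Once the concentration has been maintained for $O(\log\log \Delta(H))$ rounds, a standard greedy finish completes the list coloring with $(1+o(1))\chi'(H)$ colors per list.
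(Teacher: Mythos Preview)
The paper does not prove this statement at all: Theorem~\ref{thm:kahn} is quoted from \cite{Kahn00} as a black box and then applied in the proofs of Theorems~\ref{thm:deg1} and~\ref{thm:deg2}. So there is nothing to compare your proposal against in this paper.

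That said, your outline is a fair high-level description of Kahn's actual argument in \cite{Kahn00}: the proof does proceed via the semi-random method, coloring edges in small nibbles with activation probability~$p$, tracking that residual degrees and residual list sizes decay together, and finishing greedily once the slack is large enough. The point about using a fractional edge coloring (equivalently, the matching polytope) to handle the multigraph case is also correct in spirit---Kahn works with ``hard-core'' distributions on matchings to ensure that the local behavior around each vertex is well controlled despite high multiplicities. Your sketch glosses over the most delicate part, namely the precise concentration inequalities and the careful bookkeeping needed to propagate the estimates through all rounds, but as an outline of the strategy it is accurate. For the purposes of this paper, however, you would simply cite the result and move on, as the authors do.
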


\medskip

We need to prove a technical proposition before the proofs of Theorem~\ref{thm:deg1}~and~\ref{thm:deg2}.

\begin{proposition}\label{prop:ABB}
  If $D$ is a demand graph on the vertex set $V(K_{n,n})$ and $\Delta(D)\le n/4$, then there exists a proper edge $2\lfloor n/2\rfloor$-coloring of $D[A,B]\cup D[B]$, which induces an equitable $2\lfloor n/2\rfloor$-coloring on $D[B]$ and an almost equitable (the difference between the sizes of two color classes is $\le 2$) coloring on $D[A,B]$.
\end{proposition}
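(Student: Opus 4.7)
The plan is to set $k=2\lfloor n/2\rfloor$ and $H=D[A,B]\cup D[B]$; then $\Delta(H)=\Delta(D)\le n/4$, so Theorem~\ref{thm:shannon} yields $\chi'(H)\le \frac{3n}{8}\le k$ for $n\ge 2$. I would take, among all proper $k$-edge-colorings of $H$, one minimizing the weighted potential
\[
  \Phi(c)=M\sum_{i=1}^{k}\bigl|c^{-1}(i)\cap E(D[B])\bigr|^{2}+\sum_{i=1}^{k}\bigl|c^{-1}(i)\cap E(D[A,B])\bigr|^{2}
\]
for a large constant $M$ (say $M\ge n^{2}$), and show via two Kempe-chain exchange arguments that this minimizer is equitable on $D[B]$ and has maximum color class difference at most~$2$ on $D[A,B]$.

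The technical core is a structural analysis of the connected components $C$ of $c^{-1}(x)\cup c^{-1}(y)$ for any pair of colors $x,y$. Writing $\delta_{B}(C)$ (resp.~$\delta_{AB}(C)$) for the number of $x$-colored minus $y$-colored edges of $C$ contained in $E(D[B])$ (resp.~$E(D[A,B])$), I would observe that since $A$-vertices are incident only to $D[A,B]$-edges, they appear isolated along $C$, splitting it into \emph{$B$-runs} (maximal sub-walks on $B$-vertices joined by $D[B]$-edges) and \emph{$A$-visits} (a single $A$-vertex flanked by one or two $D[A,B]$-edges). An interior $A$-visit uses two edges of opposite colors and contributes $0$ to $\delta_{AB}(C)$; each of the at most two endpoint $A$-visits contributes $\pm 1$, so $|\delta_{AB}(C)|\le 2$. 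A parity bookkeeping on the starting colors $s_{i}$ of successive $B$-runs (via $s_{i+1}=s_{i}\cdot(-1)^{m_{i}}$, with $m_{i}$ the number of $D[B]$-edges in the $i$-th $B$-run) shows that the nonzero $B$-run contributions to $\delta_{B}(C)$ alternate in sign, giving $|\delta_{B}(C)|\le 1$. Combined with $\delta_{B}(C)+\delta_{AB}(C)\in\{-1,0,1\}$ (the total color imbalance of a path/cycle in a properly alternating two-color subgraph), the feasible pairs are restricted to
\[
  (\delta_{B}(C),\delta_{AB}(C))\in\{(0,0),(\pm 1,0),(0,\pm 1),(-1,1),(1,-1),(-1,2),(1,-2)\}.
\]

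With this list in hand both equitability claims reduce to standard exchange arguments. If $D[B]$ were not equitable, some pair $x,y$ satisfies $\sum_{C}\delta_{B}(C)\ge 2$ and the bound $|\delta_{B}(C)|\le 1$ gives a chain $C^{*}$ with $\delta_{B}(C^{*})=1$; swapping $x\leftrightarrow y$ on $C^{*}$ decreases the $D[B]$-part of $\Phi$ by at least $2$ while moving the $D[A,B]$-part by only $O(e(D))$, contradicting minimality once $M$ is large enough. If some pair violates the $D[A,B]$-condition with $\sum_{C}\delta_{AB}(C)\ge 3$, then either some chain has $(\delta_{B},\delta_{AB})=(0,\ge 1)$ — a single-chain swap suffices — or else $(0,1)$-chains are absent and a short case analysis of the counts $n_{ij}$ of chains of type $(i,j)$, under $\sum_{C}\delta_{B}\in\{-1,0,1\}$ and $\sum_{C}\delta_{AB}\ge 3$, forces the coexistence of two chains $C_{+},C_{-}$ with $\delta_{B}(C_{+})=-1$, $\delta_{B}(C_{-})=1$ and $\delta_{AB}(C_{+})+\delta_{AB}(C_{-})\ge 1$; simultaneously swapping these (automatically vertex-disjoint) Kempe chains keeps the $D[B]$-part of $\Phi$ intact while strictly decreasing the $D[A,B]$-part, again contradicting minimality. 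The main obstacle will be the parity bookkeeping needed to pin down the feasible pair list, and then the eliminatory case analysis for Claim~2 showing that the ``no good swap'' sub-configurations violate the linear inequalities derived from the chain-count constraints.
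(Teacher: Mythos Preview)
Your argument is correct, but it is a genuinely different proof from the one in the paper. The paper does not work with all of $H=D[A,B]\cup D[B]$ at once; instead it first takes \emph{separate} equitable $\lfloor n/2\rfloor$-edge-colorings of $D[B]$ and of $D[A,B]$ (both exist since $\chi'\le 2\Delta(D)\le n/2$), obtaining matchings $M_1,\dots,M_{\lfloor n/2\rfloor}$ and $N_1,\dots,N_{\lfloor n/2\rfloor}$, and then pairs $M_i$ with $N_i$ and properly $2$-colors each $M_i\cup N_i$ by hand. The point is that $M_i\cup N_i$ has an extremely simple structure (vertex-disjoint paths of length at most~$3$, each containing at most one $M_i$-edge), so an explicit parity-balancing of the $M_i$-edge colors across the length-$2$ and length-$3$ components already gives equitability on $D[B]$ and leaves the $D[A,B]$ imbalance at most~$2$. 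Your route---minimize a weighted potential over all proper $k$-colorings of $H$ and rule out imbalance by Kempe-chain swaps---works too, and the structural lemma bounding $(\delta_B(C),\delta_{AB}(C))$ is exactly the right engine; but it costs you the alternating-sign parity argument and the case analysis on the counts $n_{ij}$, whereas the paper sidesteps all of that by never letting a Kempe chain get longer than three edges. The trade-off is that the paper's proof is shorter and immediately algorithmic, while yours is a more robust template that would transfer to other ``simultaneous near-equitability'' statements where no convenient pre-factoring into short paths is available. (One small slip: you should write $\Delta(H)\le\Delta(D)$ rather than equality, since $A$-vertices may lose their $D[A]$-edges.)
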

\begin{proof}
  Observe that (by Proposition~\ref{prop:greedy})
  \begin{align*}
    \chi'(D[B])   & \le 2\Delta(D)\le \frac12n, \\
    \chi'(D[A,B]) & \le 2\Delta(D)\le \frac12n.
  \end{align*}
  By Proposition~\ref{prop:equitable}, there is a partition of $E(D[B])$ into $\lfloor n/2\rfloor $ matchings of size $\lfloor e(D[B])/n\rfloor$ and $\lceil e(D[B])/n\rceil$, say $M_1,\ldots,M_{\lfloor n/2\rfloor}$, so that $|M_i|\ge |M_j|$ for $i<j$.
  Similarly, there is a partition of $E(D[A,B])$ into $\lfloor n/2\rfloor$ matchings of size $\lfloor e(D[A,B])/n\rfloor$ and $\lceil e(D[A,B])/n\rceil$, say $N_1,\ldots,N_{\lfloor n/2\rfloor}$, so that $|N_i|\le |N_j|$ for $i<j$.
  It is sufficient to prove now that for all $i=1,\ldots,\lfloor n/2\rfloor$, there exists a 2-coloring of $M_i\cup N_i$ which is induces an equitable 2-coloring on $D[B]$ and induces an almost equitable 2-coloring on $D[A,B]$.

  \medskip

  Fix $i$. Observe, that $M_i\cup N_i$ is the vertex disjoint union of some edges and paths composed of two or three edges that alternate between elements of $M_i$ and $N_i$. The paths of two and three edges contain one edge of $M_i$ exactly. Let the number of components of $M_i\cup N_i$ containing $k$ edges be $c_k$.
  
  \medskip
  
  Color the $M_i$ edge of $\lfloor c_3/2\rfloor$ of the path components of length three with color~1, and color the $M_i$ edges of the remaining $\lceil c_3/2\rceil$ paths of length three with color~2. Similarly, color the $M_i$ edge of $\lceil c_2/2\rceil$ paths of length two with color~1, and color the remaining  $\lfloor c_2/2\rfloor$ uncolored $M_i$ edges in paths of length two with color~2.
  
  \medskip
  
  The already colored edges in $M_i$ determine the colors of edges of $N_i$ intersecting them (as we are looking for a proper edge 2-coloring). Let this proper partial edge 2-coloring be $c$. It trivially induces an equitable coloring on $D[B]$. On $D[A,B]$, we have $|c^{-1}(1)\cap D[A, B]|=2\lceil c_3/2\rceil+\lfloor c_2/2\rfloor$ and $|c^{-1}(2)\cap D[A, B]|=2\lfloor c_3/2\rfloor+\lceil c_2/2\rceil$, the difference of which is clearly at most 2.
  As the yet uncolored edges of $M_i\cup N_i$ are vertex disjoint, this partial coloring can be extended to a proper 2-coloring, which is equitable in $M_i$ and almost equitable in $N_i$.
\end{proof}

\begin{proof}[\normalfont\bfseries Proof of Theorem~\ref{thm:deg1}]
  As $D$ has an even number of vertices, we may assume that $D$ is regular by adding edges, if necessary. Clearly, $e(D[A])=e(D[B])$, $e(D)=e(D[A])+e(D[A,B])+e(D[B])$, and ${e(D)=n\cdot\Delta(D)}$.

  \medskip

  Our proof consists of three steps. In the first step, we resolve the high multiplicity edges of $D[A]$, while leaving $D[A,B]\cup D[B]$ untouched. In the second step, we lift the edges of $D[B]$ to $A$, and resolve the multiplicities of $D[A,B]$. In the third step, we lift the edges induced by $A$ to $B$, while preserving a simpleness of the bipartite subgraph induced by $A$ and $B$, thus we end up with a graph which is a realization of $D$.

  \medskip

  By Proposition~\ref{prop:greedy}, $\chi'(D[A])\le n$, so Proposition~\ref{prop:equitable} implies the existence of an equitable edge $n$-coloring $c_1$ of $D[A]$.
  We construct $D'$ from $D$ by lifting the elements of $c_1^{-1}(i)$ to $a_i$ for all $i=1,\ldots, n$. As $c_1$ is a proper coloring, $\mu(D'[A])\le 2$.
  For any $a\in A$ and $b\in B$, we have the following estimates:
  \begin{align*}
    e_{D'}(a,A) & \le e_D(a,A) +2\cdot\lceil e(D[A])/n\rceil,              \\
    e_{D'}(a,B) & =e_D(a,B),\ e_{D'}(b,A)=e_D(b,A),\ e_{D'}(b,B)=e_D(b,B).
  \end{align*}

  \medskip

  For the second step, we use Proposition~\ref{prop:ABB} to take a proper edge $n$-coloring $c_2$ of $D[A,B]\cup D[B]$, which is an (almost) equitable $n$- or $(n-1)$-coloring if restricted to both $D[A,B]$ and $D[B]$. We get $D''$ from $D'$ by lifting the elements of $c^{-1}_2(i)$ to $a_i$ for all $i=1,\ldots,n$.
  As $c_2$ is a proper edge coloring, $D''[A,B]$ is simple, and $\mu(D''[A])\le \mu(D'[A])+2\le 4$.
  For any $a\in A$ and $b\in B$, we have the following estimates:
  \begin{align*}
    e_{D''}(a,A) & \le e_{D'}(a,A)+e_{D'}(a,B)+\lceil e(D[A,B])/(n-1) \rceil +1                \\
    e_{D''}(a,B) & \le \lceil e(D[A,B])/(n-1) \rceil+1 + 2\cdot\lceil e(D[B])/(n-1) \rceil \\
    e_{D''}(b,A) & =\Delta(D),\ e_{D''}(b,B)=0.
  \end{align*}

  To each edge $e\in E(D''[A])$ with end vertices $a_i$ and $a_j$, we associate a list $L(e)$ of vertices of $B$, to which we can lift $e$ to without creating multiple edges:
  \begin{align*}
    L(e) & =B\setminus \left(N_{D''}(a_i)\cup N_{D''}(a_j)\right),
  \end{align*}
  whose size is bounded from below
  \begin{align*}
    |L(e)| & \geq n-e_{D''}(a_i,B)-e_{D''}(a_j,B) \geq                                  \\
    & \geq n-2\cdot \lceil e(D[A,B])/(n-1) \rceil - 4\cdot\lceil e(D[B])/(n-1) \rceil-2.
  \end{align*}
  
  \medskip
  
  By \citeauthor{Vizing65}'s theorem (Theorem~\ref{thm:vizing}),
  \begin{align*}
    \chi'(D''[A]) & \leq \Delta(D''[A])+\mu(D''[A]) \leq                                                     \\
    & \le \max_{a\in A}\left(e_{D'}(a,A)+e_{D'}(a,B)+\lceil e(D[A,B])/(n-1) \rceil+1\right)+4\le \\
    & \le \Delta(D)+2\cdot\lceil e(D[A])/n\rceil+\lceil e(D[A,B])/(n-1) \rceil+5.
  \end{align*}
  
  \medskip
  
  By Kahn's theorem (Theorem~\ref{thm:kahn}), $\text{ch}'(D''[A])\leq(1+o(1))\chi'(D''[A])$.
  We have $\mathrm{ch}'(D''[A])\le |L(e)|$ for each edge $e$ in $E(D''[A])$, if
  \begin{align*}
    (1+o(1)) & \left(\Delta(D)+2\cdot\lceil e(D[A])/n\rceil+\lceil e(D[A,B])/(n-1) \rceil\right)\le \\
    & \le n-2\lceil e(D[A,B])/(n-1) \rceil - 4\cdot\lceil e(D[B])/(n-1)\rceil.
  \end{align*}
  This inequality holds, if
  \begin{align*}
    (1+o(1)) & \left(\Delta(D)+2\cdot e(D[A])/n+3\cdot e(D[A,B])/n +4\cdot e(D[B])/n\right)\le n.
  \end{align*}
  Using our observations at the beginning of this proof, the previous inequality is a consequence of the regularity of $D$ and
  \begin{align*}
    & (1+o(1))\cdot 4\cdot\Delta(D)\le n.
  \end{align*}
  Thus, if the conditions of the statement of this theorem hold, there is a proper list edge coloring $c_3$ which maps each $e\in E(D''[A])$ to an element of $L(e)$. Finally, we lift every edge $e\in E(D''[A])$ to $c_3(e)$. As we do not create multiple edges between $A$ and $B$, the resulting graph is a realization of $D$.
\end{proof}

\begin{proof}[\normalfont\bfseries Proof of Theorem~\ref{thm:deg2}]
  This proof is a slight variation on the previous proof. We do not lift edges of $D[A]$ to elements of $A$. Futhermore, instead of Vizing's theorem, Shannon's theorem (Theorem~\ref{thm:shannon}) will be used to bound the chromatic index of a graph induced by $A$.

  \medskip

  We may assume that $D$ is regular. For the first step, we use Proposition~\ref{prop:ABB} to take a proper edge $n$-coloring $c_1$ of $D[A,B]\cup D[B]$, which is an (almost) equitable $n$- or $(n-1)$-coloring if restricted to $D[A,B]$ and $D[B]$. Lift $c^{-1}_1(i)$ to $a_i$ for all $i=1,\ldots,n$ to get $D'$ from $D$. Now $D'[A,B]$ is simple and $D'[B]$ is an empty graph on $n$-vertices.
  For any $a\in A$ and $b\in B$, we have the following estimates:
  \begin{align*}
    e_{D'}(a,A) & \le e_{D}(a,A)+e_{D}(a,B)+\lceil e(D[A,B])/(n-1) \rceil+1,               \\
    e_{D'}(a,B) & \le \lceil e(D[A,B])/(n-1) \rceil+1 + 2\cdot\lceil e(D[B])/(n-1) \rceil, \\
    e_{D'}(b,A) & =\Delta(D),\ e_{D'}(b,B)=0.
  \end{align*}

  To each edge $e\in E(D'[A])$ with end vertices $a_i$ and $a_j$, we associate a list $L(e)$ of vertices of $B$, to which we can lift $e$ to without creating multiple edges:
  \begin{align*}
    L(e) & =B\setminus \left(N_{D'}(a_i)\cup N_{D'}(a_j)\right),
  \end{align*}
  whose size is bounded from below
  \begin{align*}
    |L(e)| & \geq n-e_{D'}(a_i,B)-e_{D'}(a_j,B) \geq                                      \\
    & \geq n-2\lceil e(D[A,B])/(n-1) \rceil - 4\cdot\lceil e(D[B])/(n-1) \rceil-2\ge \\
    & \ge n-(1+o(1))2\Delta(D).
  \end{align*}
  By Shannon's theorem (Theorem~\ref{thm:shannon}),
  \begin{align*}
    \chi'(D'[A]) & \leq \frac32\Delta(D'[A])\leq \\
    &\le \frac32\cdot \max_{a\in A}\left(e_{D}(a,A)+e_{D}(a,B)+\lceil e(D[A,B])/(n-1) \rceil+1\right)\le \\
    & \le (1+o(1))\cdot\frac32\cdot\left(\Delta(D)+e(D[A,B])/n \right).
  \end{align*}
  Furthermore, by Kahn's theorem (Theorem~\ref{thm:kahn}), $\text{ch}'(D'[A])\leq(1+o(1))\chi'(D'[A])$.
  We have $\mathrm{ch}'(D'[A])\le |L(e)|$ for each edge $e$ in $E(D'[A])$, if
  \begin{align*}
    (1+o(1)) & \cdot\frac32\cdot\left(\Delta(D)+ e(D[A,B])/n \right)\le n-2\Delta(D).
  \end{align*}
  This holds, if
  \begin{align*}
    (1+o(1)) & \cdot\left(\frac72\cdot\Delta(D)+\frac32\cdot  \frac{e(D[A,B])}{n}\right)\le n.
  \end{align*}
  Thus, if the conditions of the statement of this theorem hold, there is a proper list edge coloring $c_2$ which maps each $e\in E(D'[A])$ to an element of $L(e)$. Finally, we lift every edge $e\in E(D'[A])$ to $c_2(e)$. As we do not create multiple edges between $A$ and $B$, the resulting graph is a realization of $D$.
\end{proof}

\section{Algorithmic versions of Theorems \ref{thm:deg1} and \ref{thm:deg2} }
\subsection{Analysis of the complexity of the \texorpdfstring{\textsc{EDP}}{EDP} problem with degree conditions}
The proofs of Theorems~\ref{thm:deg1}~and~\ref{thm:deg2} provide recipes for constructing realizations in $K_{n,n}$. In this section we derive randomized and deterministic polynomial algorithms from them.
 
\medskip

\begin{proposition}\label{prop:algeq}
	Given a proper edge $k$-coloring $c:E(H)\to \{1,\ldots,k\}$, an equitable proper edge $k$-coloring of $H$ can be computed in $\mathcal{O}(kn)$.
\end{proposition}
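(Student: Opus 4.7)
My plan is to implement the iterative swap procedure from the proof of Proposition~\ref{prop:equitable} with appropriate data structures. I would maintain: (i) the array of class sizes $s_i=|c^{-1}(i)|$; (ii) for each color, the list of its edges; (iii) for each vertex and each color, a pointer to the incident edge of that color, if any; and (iv) a bucket array (indexed by size) that provides $O(1)$-amortized access to a color of current maximum and current minimum size. Building these structures from the input coloring costs $O(m+k)$; since $\chi'(H)=k$ forces $\Delta(H)\le k$ and hence $m\le kn/2$, this fits within the claimed $O(kn)$ budget.

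The main loop repeatedly retrieves a color $x$ of maximum size and a color $y$ of minimum size, and halts if $s_x-s_y\le 1$. Otherwise, it constructs the subgraph $H_{x,y}=c^{-1}(x)\cup c^{-1}(y)$. Because $c$ is proper, $H_{x,y}$ has maximum degree $2$, hence contains at most $n$ edges, and a single linear-time traversal decomposes it into its path and cycle components in $O(n)$ time. As established in the proof of Proposition~\ref{prop:equitable}, each cycle and each even path contributes equally to the two colors, so the number of odd-length path components that begin and end with an $x$-edge exceeds the analogous count for $y$ by exactly $s_x-s_y$. Flipping the two colors along $\lfloor(s_x-s_y)/2\rfloor$ of these odd $x$-paths reduces $|s_x-s_y|$ to at most $1$. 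The corresponding edge recolorings, together with the updates to the size array, the per-vertex color pointers, and the bucket list, can all be carried out in $O(n)$ time per iteration.

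The main obstacle is bounding the total number of main-loop iterations so that the cumulative cost is $O(kn)$. Here I would exploit two structural facts that follow from the always-max-min choice of $(x,y)$: first, the global maximum $\max_i s_i$ is monotonically non-increasing and the global minimum $\min_i s_i$ is monotonically non-decreasing across iterations; second, the potential $\Phi(c)=\sum_i s_i^2$ strictly decreases by at least $(s_x-s_y)^2/2\ge 2$ at each step. An amortized charging argument against the total excess $\sum_i\max(0,\,s_i-\lceil m/k\rceil)$, which is halved (in the worst case) every time a heavy class is paired with a light one, then shows that the algorithm halts after $O(k)$ iterations, giving the desired total running time $O(kn)$. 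The delicate implementation point is ensuring the bucket list supports $O(1)$-amortized updates per size change so that no iteration exceeds its $O(n)$ budget.
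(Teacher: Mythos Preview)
Your approach diverges from the paper's at the crucial point. You balance the maximum and minimum color classes \emph{fully}, until their sizes differ by at most~$1$, whereas the paper picks any color~$i$ with $|c^{-1}(i)|>\lceil m/k\rceil$ and any color~$j$ with $|c^{-1}(j)|<\lfloor m/k\rfloor$ and swaps odd $i$-paths only until one of the two reaches its target size. The paper's stopping rule makes the iteration count immediate: after each call, at least one color permanently leaves the set $\{i:s_i>\lceil m/k\rceil\}\cup\{j:s_j<\lfloor m/k\rfloor\}$ (no other color is touched, and the chosen pair only move toward the middle), so at most $k$ calls suffice, followed by at most $k$ further single swaps to finish.

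Your iteration bound, by contrast, is not established. The claim that the total excess $\sum_i\max(0,s_i-\lceil m/k\rceil)$ is ``halved (in the worst case)'' at each step is false: with sizes $(10,10,10,0)$ and $k=4$ one has $\lceil m/k\rceil=8$ and excess $6$, but a single max/min balancing yields $(5,10,10,5)$ with excess $4$. When many classes sit at the maximum, balancing one of them against the minimum barely dents the total excess. The potential $\Phi=\sum_i s_i^2$ does strictly decrease, but its initial value can be of order $m^2=\Theta(k^2n^2)$, so the decrement $\ge 2$ per step is far too weak to give $O(k)$ iterations. Your full-balancing variant may still terminate in $O(k)$ steps, but that needs a real argument which you have not supplied; the simplest repair is to adopt the paper's partial-balancing rule, which makes the $O(k)$ bound trivial and renders the bucket machinery unnecessary.
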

\begin{proof}
	Recall the proof of Proposition~\ref{prop:equitable}. Given two colors $i$ and $j$ such that $|c^{-1}(i)|> \lceil e(H)/k\rceil$ and $|c^{-1}(j)|< \lfloor e(H)/k\rfloor$, find the connected components of $c^{-1}(i)\cup c^{-1}(j)$. By switching colors on path components containing more edges of color $i$ than $j$, we eventually reach a point where either $|c^{-1}(i)|$ has decreased to $\lceil e(H)/k\rceil$ or $|c^{-1}(j)|$ has increased to $\lfloor e(H)/k\rfloor$. This subroutine takes $\mathcal{O}(|c^{-1}(i)|+|c^{-1}(j)|)$ time.
	
	\medskip
	
	Repeat the above procedure until every color class has cardinality $\le \lceil e(H)/k\rceil$ or every color class has size  $\ge \lfloor e(H)/k\rfloor$. The subroutine is called at most $k$ times, so the algorithm took $\mathcal{O}(kn)$ time. To make the coloring equitable, at most $k$ more color switches have to be performed, each taking at most $\mathcal{O}(n)$ time.
\end{proof}
Consequently, the algorithm in the proof of Proposition~\ref{prop:ABB} also runs in $\mathcal{O}(kn)$ time.

\medskip

The proof of \citeauthor{Kahn00}'s result \cite{Kahn00} is probabilistic, and it can be emulated in polynomial time. It immediately follows that:

\begin{theorem}\label{thm:cplx1}
	Given an instance $(K_{n,n},D)$ of the \textsc{EDP} problem, where $\Delta(D)\le (1-o(1))\cdot\frac{n}{4}$, there is a randomized polynomial time algorithm which computes a realization of $D$ in $K_{n,n}$.
\end{theorem}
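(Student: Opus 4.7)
The plan is to implement each lifting stage of the proof of Theorem~\ref{thm:deg1} by a polynomial time (possibly randomized) subroutine, and then compose them. All that needs to be checked is that every combinatorial object invoked in that proof can actually be computed, not just shown to exist.

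First, I would compute an explicit proper edge $n$-coloring of $D[A]$. Since $\Delta(D[A])\le n/4$, the greedy algorithm underlying Proposition~\ref{prop:greedy} produces such a coloring in $\mathcal{O}(|E(D[A])|\cdot n)$ time, and Proposition~\ref{prop:algeq} turns it into an equitable edge $n$-coloring in $\mathcal{O}(n^2)$ additional time. The associated $n$ liftings to $a_1,\ldots,a_n$ are then a straightforward pass over the edges. The same ingredients, applied to $D[A,B]\cup D[B]$, realize the joint coloring guaranteed by Proposition~\ref{prop:ABB} in polynomial time (as noted immediately after Proposition~\ref{prop:algeq}): the matchings $M_i$ and $N_i$ are read off from equitable colorings, the two-coloring of each $M_i\cup N_i$ is a scan of its $\mathcal{O}(n)$ components, and the second round of liftings then proceeds edge by edge. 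Updating the neighborhoods $N_{D''}(a_i)$ and the lists $L(e)$ used in the third step can be done alongside these liftings at no extra asymptotic cost.

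The genuine obstacle is the third stage: given the lists $L(e)\subseteq B$ attached to the edges of $D''[A]$, we need to actually produce the proper list edge coloring $c_3$ whose existence is only asserted via Kahn's theorem (Theorem~\ref{thm:kahn}). This is the sole source of randomness in the algorithm. Kahn's proof is a Rödl-nibble-type argument that, in each of $\mathcal{O}(\log n)$ rounds, independently picks a uniformly random tentative color from each edge's current list, keeps the tentative color when no conflict arises at either endpoint, and updates the lists for the next round. Each round is plainly polynomial, the concentration inequalities supplied within Kahn's proof show that the random process succeeds with high probability, and repeating the whole subroutine a small number of times boosts the success probability as desired. This yields $c_3$ in randomized polynomial time.

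Once $c_3$ is in hand, I would perform the final lifting, sending each edge $e\in E(D''[A])$ to its color $c_3(e)\in B$; the analysis in the proof of Theorem~\ref{thm:deg1} guarantees that the resulting graph is a simple subgraph of $K_{n,n}$ whenever $\Delta(D)\le (1-o(1))\frac{n}{4}$, and this can of course be verified deterministically in polynomial time. The composition of all four stages is a randomized polynomial time algorithm that computes a realization of $D$ in $K_{n,n}$, as claimed.
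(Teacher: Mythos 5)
Your proposal is correct and follows essentially the same route as the paper: the authors likewise observe that every step of the proof of Theorem~\ref{thm:deg1} is effective (greedy colouring via Proposition~\ref{prop:greedy}, equitable colourings via Proposition~\ref{prop:algeq}, hence Proposition~\ref{prop:ABB} algorithmically), with the sole source of randomness being a polynomial-time emulation of Kahn's probabilistic proof of Theorem~\ref{thm:kahn}. The only detail you leave implicit is the initial step of making $D$ regular by adding edges, which is trivially done in polynomial time.
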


By using greedy edge coloring algorithms, we get deterministic algorithms (albeit with tighter upper bounds on $\Delta(D)$).
\begin{theorem}\label{thm:cplx2}
	Given an instance $(K_{n,n},D)$ of the \textsc{EDP} problem, where 
\[ \Delta(D)\le \frac16 (n-7),\text{ or}\]
\[ \Delta(D)\le \frac14\left( n-2\cdot\left\lceil\frac{e(D[A,B])}{n-1} \right\rceil-5\right), \]
there is a deterministic $\mathcal{O}(\Delta(D)\cdot n)$ time algorithm which computes a realization of $D$ in $K_{n,n}$.
\end{theorem}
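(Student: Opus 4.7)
My plan is to derandomize the proofs of Theorems \ref{thm:deg1} and \ref{thm:deg2}. Both proofs are already constructive at every step except for one appeal to Kahn's theorem (Theorem \ref{thm:kahn}), which is used to pass from the chromatic index to the list chromatic index of the graph $D''[A]$ (resp.\ $D'[A]$) obtained after all the intermediate liftings. Kahn's proof is probabilistic, and that is what forces randomness into Theorem \ref{thm:cplx1}. I would instead replace Kahn's bound $\mathrm{ch}'(H) \le (1+o(1))\chi'(H)$ by the greedy bound $\mathrm{ch}'(H) \le 2\Delta(H)-1$ from Proposition \ref{prop:greedy}, whose proof is deterministic, constructive, and easily implementable. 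The price we pay is a tighter condition on $\Delta(D)$.

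For the first bound I follow the three-step recipe of the proof of Theorem \ref{thm:deg1} verbatim: lift $D[A]$ to $A$ to form $D'$; lift $D[A,B]\cup D[B]$ to $A$ to form $D''$; and finally lift $D''[A]$ to $B$ via a list edge coloring. Substituting the greedy bound $\mathrm{ch}'(D''[A]) \le 2\Delta(D''[A])-1$ and the estimates for $\Delta(D''[A])$ and $|L(e)|$ already derived in that proof, the inequality $\mathrm{ch}'(D''[A]) \le |L(e)|$ becomes
\[ 2\Delta(D) + 4\lceil e(D[A])/n\rceil + 4\lceil e(D[A,B])/(n-1)\rceil + 4\lceil e(D[B])/(n-1)\rceil + 3 \le n. \]
Using the regularity assumption ($e(D[A])=e(D[B])$ and $2e(D[A])+e(D[A,B])=n\Delta(D)$) together with a careful accounting of the ceilings simplifies the left-hand side to $6\Delta(D)+7$, and the condition becomes the first hypothesis $6\Delta(D)+7 \le n$. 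For the second bound I follow the two-step recipe of the proof of Theorem \ref{thm:deg2} (skipping the initial lifting of $D[A]$). Because the greedy bound goes directly from $\Delta$ to $\mathrm{ch}'$, Shannon's theorem is no longer needed. The analogous inequality $2\Delta(D'[A])-1 \le |L(e)|$, with the bounds from that proof substituted, rearranges to $4\Delta(D) + 2\lceil e(D[A,B])/(n-1)\rceil + 5 \le n$, which is exactly the second hypothesis.

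Every step of the algorithm is deterministic: the greedy proper edge colorings of the relevant induced subgraphs (with a per-vertex bitmap of used colors), the equitable/almost-equitable reassignment from Proposition \ref{prop:algeq} (which is $\mathcal{O}(\Delta(D)\cdot n)$ given a proper coloring), the lifting operations (constant work per edge, of which there are $\mathcal{O}(\Delta(D)\cdot n)$ in total), and the final greedy list edge coloring. Summing, the overall running time is $\mathcal{O}(\Delta(D)\cdot n)$. The principal technical obstacle is tracking the rounding terms precisely enough to extract the concrete constants $-7$ and $-5$; this requires care with the ``almost equitable'' qualifier in Proposition \ref{prop:ABB} and with whether the color classes have size $\lceil \cdot /n\rceil$ or $\lceil \cdot /(n-1)\rceil$ depending on the parity of $n$.
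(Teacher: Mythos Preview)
Your proposal is correct and matches the paper's own proof essentially line for line: replace Kahn's theorem by the greedy bound $\mathrm{ch}'(H)\le 2\Delta(H)-1$ in the proofs of Theorems~\ref{thm:deg1} and~\ref{thm:deg2}, substitute into the same inequalities $\mathrm{ch}'\le |L(e)|$, and chase the constants to obtain $6\Delta(D)+7\le n$ and $4\Delta(D)+2\lceil e(D[A,B])/(n-1)\rceil+5\le n$, with the running time handled via Proposition~\ref{prop:algeq}. Your remark that Shannon's theorem becomes unnecessary in the second derivation is also exactly how the paper proceeds.
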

\begin{proof}
	We make the demand graph regular in $\mathcal{O}(\Delta(D)\cdot n)$. 
	The time complexity of the greedy edge coloring algorithm is linear, and emulating Proposition~\ref{prop:ABB} requires $\mathcal{O}(\Delta(D)\cdot n)$ time (see Proposition~\ref{prop:algeq}). Constructing $D'$ and $D''$ by lifting the appropriate edges only takes linear time.
	
	\medskip
	
	In the proof of Theorem~\ref{thm:deg1}, replace \citeauthor{Kahn00}'s list edge coloring with the greedy algorithm (Proposition~\ref{prop:greedy}). To get a proper list edge coloring of $D''[A]$, it is sufficient to have
	\begin{align*}
		2\Delta(D''[A])-1\le n-2\cdot \left\lceil \frac{e(D[A,B])}{n-1} \right\rceil - 4\cdot\left\lceil \frac{e(D[B])}{n-1} \right\rceil-2 \\
		2\Delta(D)+4\cdot\left\lceil \frac{e(D[A])}{n}\right\rceil+4\cdot \left\lceil \frac{e(D[A,B])}{n-1} \right\rceil+4\cdot\left\lceil \frac{e(D[B])}{n-1}\right\rceil\le n-3
	\end{align*}
	which is satisfied if $\Delta(D)\le \frac16(n-7)$.
	
	\medskip
	
	In the proof of Theorem~\ref{thm:deg2}, to get a proper list edge coloring of $D'[A]$ via greedy coloring, it is sufficient to have
	\begin{align*}
		2\cdot \left(\Delta(D)+\left\lceil \frac{e(D[A,B])}{n-1}\right\rceil+1\right)-1\le n-2\cdot \left\lceil \frac{e(D[A,B])}{n-1} \right\rceil - 4\cdot\left\lceil \frac{e(D[B])}{n-1} \right\rceil-2 \\
		2\cdot \Delta(D)+4\cdot\left\lceil \frac{e(D[A,B])}{n-1}\right\rceil+4\cdot\left\lceil \frac{e(D[B])}{n-1} \right\rceil\le n-3
	\end{align*}
	which is satisfied when the second inequality of the conditions of this theorem holds.
\end{proof}

\medskip

These bounds are not as tight as our theoretical bounds, but are smaller only by a factor of $\frac32$ and $\frac87$, respectively. 

\subsection{Approximate solutions to the \textsc{MaxEDP} problem}

Given an instance $(G,D)$ of the \textsc{EDP} problem, the \textsc{MaxEDP} problem asks for the subgraph $D'\subseteq D$ with the maximum number of edges such that $(G,D')$ is realizable.

\medskip

The algorithms for \textsc{EDP} in the previous section can be turned into constant factor approximation algorithms for the \textsc{MaxEDP} problem at the cost of a sensible additional term to their running time.


\begin{theorem}\label{thm:approxMaxEDP}
	Let $(K_{n,n},D)$ be an instance of the \textsc{MaxEDP} problem. Let $\mathcal{A}$ be an algorithm that can solve an instance $(K_{n,n},D^*)$ of the \textsc{EDP} problem given a maximum degree condition $\Delta(D^*)\le t$. Then there is an algorithm $\mathcal{B}$ which gives a $(3n/2t)$-approximation solution to the \textsc{MaxEDP} problem. The running time of algorithm~$\mathcal{B}$ is at most $\mathcal{O}(e(D)\cdot n)$ plus the running time of algorithm $\mathcal{A}$ on the instance $(K_{n,n},D^*)$, where $D^{*}$ is some subgraph of $D$.
\end{theorem}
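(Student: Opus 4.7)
The plan is to compute a subgraph $D^{*}\subseteq D$ with $\Delta(D^{*})\le t$ and $e(D^{*})\ge\tfrac{2t}{3n}\,e(\mathrm{OPT})$, where $\mathrm{OPT}$ is a maximum realizable subgraph of $D$, and then feed $D^{*}$ to $\mathcal{A}$. The realization of $D^{*}$ returned by $\mathcal{A}$ then satisfies $e(\mathrm{OPT})/e(D^{*})\le 3n/2t$, which is the desired approximation ratio.

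To construct $D^{*}$, I would first reduce degrees. Every realizable subgraph --- $\mathrm{OPT}$ in particular --- has maximum degree at most $n$, because a vertex of $K_{n,n}$ is incident to only $n$ base-graph edges. Compute a maximum subgraph $D_{0}\subseteq D$ with $\Delta(D_{0})\le n$ via a standard $b$-matching routine ($b\equiv n$) on the multigraph $D$; since $\mathrm{OPT}$ is feasible for this subproblem, $e(D_{0})\ge e(\mathrm{OPT})$. Next, apply the constructive form of Shannon's theorem (Theorem~\ref{thm:shannon}) to properly edge-color $D_{0}$ with at most $\lceil 3n/2\rceil$ colors, and equitize the coloring via Proposition~\ref{prop:algeq}. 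Let $D^{*}$ be the union of the $t$ color classes of largest cardinality; each class is a matching, so $\Delta(D^{*})\le t$, and by equitability
\[ e(D^{*})\ge\frac{t}{\lceil 3n/2\rceil}\,e(D_{0})\ge\frac{2t}{3n}\,e(\mathrm{OPT}). \]
Finally, invoke $\mathcal{A}$ on $(K_{n,n},D^{*})$ to obtain the realization that $\mathcal{B}$ outputs.

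The running-time accounting is routine once the subroutines are fixed: Shannon's constructive proof, the equitable rebalancing (Proposition~\ref{prop:algeq}), and the assembly of $D^{*}$ by lifting color labels each cost $\mathcal{O}(e(D)\cdot n)$. The main obstacle --- and the only step requiring real care --- is Step~1: the $b$-matching routine must both produce a subgraph whose size is provably at least $e(\mathrm{OPT})$ (which follows from optimality) and fit inside the $\mathcal{O}(e(D)\cdot n)$ budget. Any polynomial-time $b$-matching algorithm for multigraphs that respects this budget suffices. The degree cap is essential: without it, a direct application of Shannon's theorem to $D$ would only guarantee $e(D^{*})\gtrsim\tfrac{2t}{3\Delta(D)}\,e(D)$, which is too weak whenever $\Delta(D)\gg n$ (easy examples with a single high-multiplicity bundle show this), and would not yield a $(3n/2t)$-approximation.
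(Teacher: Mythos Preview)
Your argument is correct and yields the same $(3n/2t)$ ratio, but the construction differs from the paper's in one instructive way. The paper computes $D^{*}$ in a \emph{single} step: it directly finds a maximum-edge subgraph of $D$ with $\Delta\le t$ (via a degree-constrained subgraph / flow computation), and then invokes Shannon's theorem \emph{only in the analysis}, applied to the unknown optimum $D_{\mathrm{opt}}$, to certify that any such maximum $D^{*}$ already has at least $\tfrac{2t}{3n}\,e(D_{\mathrm{opt}})$ edges. You instead run a two-stage algorithm: first cap degrees at $n$ to obtain $D_{0}$, then apply Shannon \emph{constructively} to $D_{0}$ and peel off $t$ colour classes. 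Your route trades one optimisation for a simpler one (cap $n$ rather than cap $t$) at the price of needing an efficient edge-colouring subroutine; the paper's route needs only the optimiser and keeps Shannon purely existential. Both are valid and fit the stated time budget.

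One small arithmetic slip: Shannon gives $\chi'(D_{0})\le\lfloor 3n/2\rfloor$, not $\lceil 3n/2\rceil$, and you need the floor for the final inequality, since $\tfrac{t}{\lceil 3n/2\rceil}<\tfrac{2t}{3n}$ when $n$ is odd. With $\lfloor 3n/2\rfloor$ colour classes the chain
\[
e(D^{*})\ge\frac{t}{\lfloor 3n/2\rfloor}\,e(D_{0})\ge\frac{2t}{3n}\,e(D_{0})\ge\frac{2t}{3n}\,e(\mathrm{OPT})
\]
goes through as written.
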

\begin{proof}
	Let $D_\text{opt}$ be a subgraph of $D$ that has the maximum number of edges among those that are realizable in $K_{n,n}$. Trivially, $\Delta(D_\text{opt})\le n$. Since $D$ is bipartite, using a folklore reduction to a maximum flow algorithm, the subgraph $D^{*}\subseteq D$ with the maximum number of edges such that $\Delta(D')\le t$ can be found in $\mathcal{O}(e(D)\cdot n)$. (For regular $D$, the time complexity can be improved to $\mathcal{O}(e(D)\cdot\log\Delta(D))$ using the algorithm of \cite{MR1805711}.)
	
	\medskip
	
	Partition $E(D_\text{opt})$ into $\lfloor\frac32\Delta(D)\rfloor$ matchings (see Theorem~\ref{thm:shannon}). Order the matchings in decreasing order of their cardinality. Let the spanning subgraph of $D_\text{opt}$ formed by the union of the first $t$ of its largest matchings be $D'$.
	Since $\Delta(D')\le t$, we have $e(D')\le e(D^*)$. Furthermore, because we take the largest matchings,
	\[ |E(D^*)|\ge |E(D')|\ge \frac{2t}{3n}\cdot |E(D^\text{opt})|.\]
	
	\medskip
	
	Lastly, Algorithm~$\mathcal{A}$ can be used to compute a realization of $D^*$ in $K_{n,n}$.
\end{proof}
Using Theorems~\ref{thm:cplx1} and \ref{thm:cplx2}, we have the following corollaries.
\begin{corollary}
	Given an instance $(K_{n,n},D)$ of $\textsc{MaxEDP}$, there is a randomized polynomial time algorithm which produces a $(6+\mathcal{O}(1/n))$-approximation solution.
\end{corollary}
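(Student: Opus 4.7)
The plan is to plug Theorem~\ref{thm:cplx1} into Theorem~\ref{thm:approxMaxEDP} without any additional machinery. Take $\mathcal{A}$ to be the randomized polynomial time algorithm supplied by Theorem~\ref{thm:cplx1}, which correctly handles any instance $(K_{n,n},D^{*})$ of \textsc{EDP} with $\Delta(D^{*})\le t$ for $t=(1-o(1))\cdot n/4$. Theorem~\ref{thm:approxMaxEDP} then immediately yields a randomized algorithm $\mathcal{B}$ for \textsc{MaxEDP} whose approximation ratio is
\[ \frac{3n}{2t} \;=\; \frac{3n}{2\cdot (1-o(1))\cdot n/4} \;=\; \frac{6}{1-o(1)} \;=\; 6 + \mathcal{O}(1/n). \]

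The running time is polynomial on two grounds. First, a bounded-degree subgraph $D^{*}\subseteq D$ of maximum cardinality subject to $\Delta(D^{*})\le t$ is produced via the max-flow reduction mentioned in the proof of Theorem~\ref{thm:approxMaxEDP} in time $\mathcal{O}(e(D)\cdot n)$. Second, the subsequent call to $\mathcal{A}$ on $(K_{n,n},D^{*})$ runs in randomized polynomial time by Theorem~\ref{thm:cplx1}. The composition of the two steps is therefore also randomized polynomial time, as required.

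The only point meriting care is the conversion of the $o(1)$ term inherited from Kahn's theorem (Theorem~\ref{thm:kahn}) and from the ceiling roundings in Proposition~\ref{prop:ABB} into the $\mathcal{O}(1/n)$ additive correction to the ratio. I expect this to be routine bookkeeping rather than a real obstacle: choosing $t=n/4-\Theta(1)$ a constant below the threshold permitted by Theorem~\ref{thm:deg1} and using the expansion $6/(1-\varepsilon)=6+\mathcal{O}(\varepsilon)$ for $\varepsilon=\mathcal{O}(1/n)$ produces the stated overhead, so no ideas beyond Theorems~\ref{thm:approxMaxEDP} and~\ref{thm:cplx1} are required.
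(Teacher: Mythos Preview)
Your proposal is exactly the paper's argument: the corollary is stated there as an immediate consequence of Theorems~\ref{thm:approxMaxEDP} and~\ref{thm:cplx1} with no additional proof, and you have correctly spelled out how the substitution $t=(1-o(1))\cdot n/4$ yields the ratio $3n/(2t)=6/(1-o(1))$.

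One caveat on the point you yourself flag: the $o(1)$ in Theorem~\ref{thm:deg1} (and hence in Theorem~\ref{thm:cplx1}) is inherited from Kahn's theorem, whose error term is not known to decay as fast as $1/n$, so your proposed choice $t=n/4-\Theta(1)$ is not actually licensed by Theorem~\ref{thm:deg1} as stated. The argument as written only delivers a $6+o(1)$ ratio; the paper's sharper $\mathcal{O}(1/n)$ claim is asserted without justification there as well, so this is not a divergence from the paper's proof but a looseness shared with it.
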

\begin{corollary}
	Given an instance $(K_{n,n},D)$ of $\textsc{MaxEDP}$, there is a deterministic $\mathcal{O}(e(D)\cdot n)$ time algorithm which produces a $(9+\mathcal{O}(1/n))$-approximation solution.
\end{corollary}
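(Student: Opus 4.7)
The plan is to instantiate Theorem~\ref{thm:approxMaxEDP} by taking the subroutine $\mathcal{A}$ to be the deterministic \textsc{EDP} solver provided by Theorem~\ref{thm:cplx2}. Using the first of the two degree conditions there, set $t:=\lfloor (n-7)/6\rfloor$, so that $\mathcal{A}$ applies to every subgraph $D^{*}\subseteq D$ with $\Delta(D^{*})\le t$. Substituting this $t$ into the approximation factor $3n/(2t)$ promised by Theorem~\ref{thm:approxMaxEDP} gives
\[ \frac{3n}{2t}=\frac{9n}{n-7}=9+\mathcal{O}(1/n), \]
which is exactly the claimed approximation ratio.

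For the running time, Theorem~\ref{thm:approxMaxEDP} contributes $\mathcal{O}(e(D)\cdot n)$ for the flow-based extraction of $D^{*}$, and by Theorem~\ref{thm:cplx2} the call to $\mathcal{A}$ contributes a further $\mathcal{O}(\Delta(D^{*})\cdot n)=\mathcal{O}(n^{2})$. These bounds sum to $\mathcal{O}(e(D)\cdot n)$ as soon as $e(D)\ge n$. In the residual case $e(D)<n\le 2n-3$, Theorem~\ref{thm:edge} shows the input demand graph is already fully realizable, so the constructive proof of that theorem can be turned into an $\mathcal{O}(n^{2})$-time exact algorithm and no approximation is needed; the total time fits the stated $\mathcal{O}(e(D)\cdot n)$ budget in both regimes.

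I do not expect a genuine technical obstacle: the corollary is a plug-and-play composition of Theorems~\ref{thm:approxMaxEDP} and~\ref{thm:cplx2}, and the only mild care required is the bookkeeping of the $\mathcal{O}(n^{2})$ contribution from $\mathcal{A}$, which is absorbed into $\mathcal{O}(e(D)\cdot n)$ by the small case-split above.
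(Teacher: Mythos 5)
Your proposal is correct and matches the paper's (implicit) argument: the corollary is obtained exactly by instantiating Theorem~\ref{thm:approxMaxEDP} with the deterministic solver of Theorem~\ref{thm:cplx2} at $t=\lfloor (n-7)/6\rfloor$, yielding the ratio $3n/(2t)=9n/(n-7)=9+\mathcal{O}(1/n)$. Your closing case-split for the running time is harmless but unnecessary, since $\Delta(D^{*})\le e(D^{*})\le e(D)$ already bounds the $\mathcal{O}(\Delta(D^{*})\cdot n)$ cost of $\mathcal{A}$ by $\mathcal{O}(e(D)\cdot n)$ without invoking Theorem~\ref{thm:edge}.
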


\section{Proof of the edge version (Theorem~\ref{thm:edge})}

We apply induction on $n$. It is easy to check the result for $n\leq 3$, so let us assume from now on that $n\geq4$. Since a subgraph of a realizable graph is realizable as well,
it is enough to prove the result for demand graphs $D$ on exactly $2n-3$ edges.
 Recall that $A$ and $B$ be are the color classes of $K_{n,n}$, and let
\[ S=\{v\in A\cup B:d_D(v)\geq n-1\}. \]

\medskip

Since $D$ has $2n-3$ edges, it is clear that $|S|\leq 3$ and that for every pair of vertices in $S$ there is at least one edge joining them.

\medskip

For a vertex $v\in V(D)$, we denote by $d(v)$ its degree and by $\gamma_A(v)$, $\gamma_B(v)$ the number of neighbors of $v$ in class $A$ and $B$, respectively. Let $d'(v), \gamma_A'(v), \gamma_B'(v)$ denote the value of these quantities after resolution of a vertex in $D$; similarly, $d''(v)$, $\gamma_A''(v)$, $\gamma_B''(v)$ denotes the values after the resolution of a second vertex, and so on.  We denote the multiplicity of an edge $uv$ by $\mu(uv)$, and we call it \emph{monochromatic} if $u$ and $v$ are in the same color class of $D$, and \emph{crossing}, otherwise.

\medskip

Notice that, for a vertex $v\in A$, we need precisely $d(v)-\gamma_B(v)$ vertices in $B\backslash N_B(v)$  (which can be freely chosen in this set) to lift all the multiple edges and monochromatic edges incident to $v$. After these liftings, which increased the number of edges of the graph by $d(v)-\gamma_B(v)$, all the edges incident to $v$ have their other endpoint in $B$ and are simple. Clearly, we have the same for a vertex in $B$, exchanging all the occurrences of $A$ and $B$. We say in this case that $v$ is \emph{resolved}.

\medskip

For the induction step, we will resolve $t=1 \text{ or } 3$ vertices in each color class of $D$ (possibly making some liftings before), remove them from the graph, getting a smaller graph $D'$, and apply the induction hypothesis on $D'$. It is clear that $D$ is realizable if $D'$ is. By the inductive hypothesis, $D'$ is realizable if the following conditions hold:

\medskip

\vbox{\begin{enumerate}
\item $\Delta(D')\leq n-t$,
\item $D'$ has at most $2(n-t)-3$ edges, i.e., there were at least $2t$ edges incident to the $2t$ removed vertices after their resolution.
\end{enumerate}}

\medskip

Assume first that there are $3$ vertices of degree $n$ in $D$ lying on the same color class (this can only happen if $n\geq6$, since we must have $3n\leq \sum_{v\in D} d(v)=4n-6$). 
In this case, all other vertices in $D$ have degree at most $4n-6-3n=n-6$.

\medskip

Let $x,y,z\in A$ be the vertices of degree $n$. As $e(D)=2n-3$, it is clear that we have $\mu(xy)+\mu(xz)+\mu(yz)\geq n+3$ and that there are at least $6$ isolated vertices in $B$. We choose three from them, say, $a,b,c$. Without loss of generality, we may assume that $\mu(xy)+\mu(xz)\geq 2/3\cdot(n+3)\geq 6$.

\medskip

We resolve $x$, $y$ and $z$ in this order. After resolving $x$, we have $\gamma_B'(y)\geq\mu(xy)$, and after resolving $y$, we have $\gamma_B''(z)\geq\mu(xz)$. In total, we add $d(x)-\gamma_B(x)+d'(y)-\gamma_B'(y)+d''(z)-\gamma_B''(z)$ edges to $D$, and we delete at least $d(x)+d'(y)+d''(z)$ edges when we remove $x,y,z,a,b,c$ from $D$, so $e(D')\leq e(D)-(\gamma_B(x)+\gamma_B'(y)+\gamma_B''(z))\leq e(d)-(\mu(xy)+\mu(xz))\leq e(D)-6$ edges, and we can apply induction since $\Delta(D')\leq n-3$.

\medskip

From now on, we may assume that there are at most two vertices of degree $n$ in a class.

\medskip

Let $u$ be a maximum degree vertex in $D$. We may assume that $u\in A$. We distinguish some cases based on the value of $\gamma_B(u)$:

\makeatletter
\renewcommand{\thesubsection}{\bfseries Case~\arabic{subsection}}
\renewcommand\subsection{\@startsection{subsection}{2}{\z@}%
{9\p@ \@plus 6\p@ \@minus 3\p@}%
{3\p@ \@plus 6\p@ \@minus 3\p@}%
{\normalfont\normalsize}}

\renewcommand{\thesubsubsection}{\bfseries Case~\arabic{subsection}.\arabic{subsubsection}}
\renewcommand\subsubsection{\@startsection{subsubsection}{2}{\z@}%
{9\p@ \@plus 6\p@ \@minus 3\p@}%
{3\p@ \@plus 6\p@ \@minus 3\p@}%
{\normalfont\normalsize}}

\renewcommand{\theparagraph}{\bfseries Case~\arabic{subsection}.\arabic{subsubsection}.\arabic{paragraph}}
\renewcommand\paragraph{\@startsection{paragraph}{2}{\z@}%
{9\p@ \@plus 6\p@ \@minus 3\p@}%
{3\p@ \@plus 6\p@ \@minus 3\p@}%
{\normalfont\normalsize}}

\makeatother

\subsection{\texorpdfstring{$\gamma_B(u)\geq2$, or $\gamma_B(u)=1$ and $N_A(u)\neq\emptyset$.}{��\_B(u)\textge 2, or ��\_B(u)=1 and N\_A(u)≠∅.}} We resolve the demands of $u$ first. Then, if $N_A(u) \ne \emptyset$, let $u'\in N_A(u)$ be a vertex of maximum degree in this set, and $v\in B$ be a vertex that was used for a lifting of an edge $uu'$. Otherwise, if $N_A(u)=\emptyset$, let $v$ be an arbitrary neighbor of $u$ in $B$. We resolve the vertex $v$ using the available vertices in $A$ for the lifts in increasing order of degree, and then delete $u$ and $v$. 

\medskip

We claim that the remaining graph $D'$ satisfies $\Delta(D')\leq n-1$. Indeed, the procedure above increases the degree of a vertex by one if it was used for a lift of either $u$ or $v$, does not increase the degree of any other vertex in the graph, and decreases the degree of the neighbors of $u$ in $B$ by at least one. Since the vertices used for a lift of $u$ are not joined to it, and hence have degree at most $n-2$ (recall that every vertex of degree at least $n-1$ is joined to a maximum degree vertex), no vertex in $B$ has degree more than $n-1$ after the procedure. On the other hand, we could have a non-neighbor of $v$, $x\in A$, distinct from $u$ and $u'$, which has degree at least $n-1$ originally. This vertex would end up with degree at least $n$ after the procedure in case it is used for a lift of $v$. The way we chose the vertices in $A$ for lifts of $v$ would imply, however, that $d(v)\leq n-2$, and so $4n-6=\sum_{v \in D}d(v)\geq d(u)+d(u')+d(x)+d(v)=4n-5$, a contradiction.

\medskip

The liftings added $d(u)-\gamma_B(u)+d'(v)-\gamma_A'(v)$ edges to $D$, and we deleted $d(u)+d'(v)-1$ edges when we remove $u$ and $v$, so $e(D')\leq e(D)-(\gamma_B(u)+\gamma_A'(v)-1)\leq e(D)-2$, so we can apply the induction hypothesis on $D'$.

\subsection{\texorpdfstring{$\gamma_B(u)=1$ and $N_A(u)=\emptyset$.}{��\_B(u)=1 and N\_A(u)=∅.}}
Let $u'$ be the neighbor of $u$ in $B$. If $u'$ has another neighbor distinct from $u$, we would have $d(u')>d(u)$, a contradiction. So $uu'$ forms a bundle. Also, if there is any crossing edge $vv'$ not belonging to this bundle, we resolve $u$ first and then $v'\in B$ without using $u$ in a lift (which is possible since we need $d'(v')-\gamma'_A(v')\leq n-2-1=n-3$ vertices of $A$ for the lifts). We are done by induction again after we deleting $u$ and $v$, since $e(D')\leq e(D)-2$ and $\Delta(D')\leq n-1$.

\medskip

Assume now that $E(D)$ consists of the bundle $uu'$ and monochromatic edges not incident to $u$ or $u'$.
In this case, we take $a\neq u$ in $A$, $b\neq u'$ in $B$ with smallest degree (by the number  of edges, it is at most $3$). Let $e$ be an edge, say, in $A$, which is not incident to $a$. We lift $e$ to $b$, and replace one copy of the edge $uu'$ by the path $ubau'$. Then we resolve the multiple edges of $a$ and $b$, and delete both of them. The remaining graph $D'$ has $\Delta(D')\leq n-1$ and two less edges than $D$, so we may apply the induction hypothesis on $D'$.

\subsection{\texorpdfstring{$\gamma_B(u)=0$.}{��\_B(u)=0.}}

Among the neighbors of $u$, let $u'$ be one with the largest degree. Let us consider two cases:

\medskip

\subsubsection{There is an edge $e$ independent of $uu'$.} If $e$ is a crossing edge, let $e=ab$. If not, let $a$ and $b$ be vertices in $A$ and $B$, respectively, distinct from $u$, $u'$ and the endpoints of $e$. In the first case, we lift $uu'$ to $b$, and in the second, we also lift $e$ to the vertex $a$ or $b$ which is in the opposite class of $e$. Then, we resolve the vertices $a$ and $b$ and delete them. In both cases, it is clear that the remaining graph $D'$ satisfies $e(D')\leq e(D)-2$ and $\Delta(D')\leq n-1$, so the result follows from induction applied in $D'$.

\medskip

\subsubsection{There is no edge independent from $uu'$.} As $e(D)=2n-3$ and $d(u), d(u')\leq n$, it follows that $uu'$ is an edge of multiplicity at most 3. So, it is clear that there are two independent edges $e$ and $f$ such that $u$ and $u'$ are incident to $e$ and $f$, respectively. Again, we let $a$, $b$ be vertices in $A$ and $B$ not incident to $e$ or $f$, and we lift both edges to $b$. After resolving and deleting $a$ and $b$, we are left with $D'$ with $\Delta(D')\leq n-1$ and $e(D')\leq e(D)-2$, so we are done by the induction hypothesis on $D'$.

\bigskip

\renewcommand*{\bibfont}{\small}
\section*{References}
\printbibliography[heading=none]

\end{document}